\newcommand{\commentout}[1]{}
\newcommand {\e}  {\varepsilon}
\newcommand {\Chi} {{\bf \raise 2pt \hbox{$\chi$}} }
\newcommand {\f}   {\frac}
\newcommand {\p}   {\partial}
\newcommand{\dis}{\displaystyle}
\newcommand{\beq}{\begin{equation}}
\newcommand{\eeq}{\end{equation}}
\newcommand{\bea} {\begin{array}{rl}}
\newcommand{\eea} {\end{array}}
\newcommand{\bepa}{\left\{ \begin{array}{l}}
\newcommand{\eepa} {\end{array}\right.}
\newtheorem{theorem}{Theorem}[section]
\newtheorem{lemma}[theorem]{Lemma}
\newtheorem{proposition}[theorem]{Proposition}
\newcommand{\qed}{{ \hfill
                       {\unskip\kern 6pt\penalty 500 \raise -2pt\hbox{\vrule\vbox to 6pt{\hrule width 6pt
                       \vfill\hrule}\vrule} \par}   }}
\title{  Global solution for a kinetic chemotaxis model   with internal dynamics and its fast adaptation limit  }
\author{
Jie Liao\thanks{Department of Mathematics, East China University of Science and Technology,  Shanghai, 200237, P. R. China, Email: liaojie@ecust.edu.cn.  
  }
}
\date{\today}
\begin{document}
\maketitle
\pagestyle{plain}
\pagenumbering{arabic}

\numberwithin{equation}{section}

\begin{abstract}

 A  nonlinear kinetic chemotaxis model  with internal dynamics  incorporating signal transduction and adaptation is considered. 
This paper is concerned with: (i) the  global solution for this model,  and, (ii)  its fast adaptation limit to Othmer-Dunbar-Alt type model.
 This limit  gives some  insight to the molecular origin of  the chemotaxis behaviour.

 First, by using the Schauder fixed point theorem, the global existence of weak solution is proved based on detailed a priori estimates, 
 under some quite general assumptions on the model and the initial data. 
However, the Schauder fixed point theorem does not provide uniqueness. Therefore, additional analysis is required to be developed to obtain uniqueness. 
 
Next, the fast adaptation limit of this model  is derived by extracting a weak convergence subsequence in measure space. For this limit,
the first difficulty is to show the concentration effect on  the internal state. 
When the small parameter $\e$, the adaptation time scale, goes to zero, we prove that the solution converges to a Dirac mass in the internal state variable. Another difficulty is the strong compactness argument on the chemical potential, which is essential for passing the nonlinear kinetic equation to the weak limit.

\end{abstract}

\bigskip

\noindent {\bf Key words:}  kinetic chemotaxis model; internal dynamics; global solution; fast adaptation limit
\\
\noindent {\bf Mathematics Subject Classification (2010):}  35B25; 35Q92;   80A30


\section{Introduction}
\label{sec:intro}

 

Chemotaxis is a mechanism by which cells or bacteria efficiently and rapidly respond to changes in the chemical composition of their environment, for example approaching chemically favorable environments or avoiding unfavorable ones. 
This behavior is achieved by two major steps: (i) detection of the signal and (ii) integrate the signals received from receptors that triggers the response. We consider the  kinetic chemotaxis model  with internal dynamics  incorporating signal transduction and adaptation 
 \beq \label{intr1}
 \left\{
\begin{array}{l}
 \p_t p  +  v \cdot \nabla_x p  + \p_m [ { F(m, S ) \over \e}  p ] = {\dis \int_V }T(v,v',m) p (x,v',m,t) - T(v',v,m) p (x,v,m,t) dv', \vspace{2mm}\\
 -\Delta S  +S  = n (x,t):= {\dis \int_{0}^{\infty}\int_{V} }p (x,v,m,t) dv dm, \vspace{2mm} \\
 p (x,v, m=0,t)=0, \qquad p (x,v, m=+\infty,t)=0,
\end{array}
\right.
 \eeq
where $p (x,v,m,t)$ denotes the cell density at position $x\in \mathbb{R}^d$, velocity $v\in V$,  with internal state $m>0$ (for example, methylation level) and time $t$, $S(x,t) $ is the  chemical potential.  
The velocity space $V$ is assumed to be a   compact domain in $\mathbb{R}^d$, typical example being unit ball $\{|v|\leq 1\}$. 
The  parameter $\e$ in the model characterises the adaptation time scale.

This model has been developed in \cite{SPO,BL,M}, and further elaborated by  \cite{EO,EO1}, to connect  the aspects of the signal transduction and response to the macroscopic equations  by using moment closure techniques.  
This type of model was also studied in some other references, for example \cite{STY, Si,Tu}, to bridge the molecular level pathway dynamics with the  cellular behavior such as  cell population level motility  in some biological systems.
 These works made possible the development of predictive agent-based models that include the intracellular signaling pathway dynamics. For instance, it is of great biological interest to understand the molecular origins of chemotactic behavior of E. coli by deriving a population-level model based on the underlying signaling pathway dynamics. 
 We further mention that, this model also has close relation to the kinetic theory for active particles \cite{B1,B2} (KTAP), which has been applied to model various complex systems in life sciences, for instance biological growing tissues \cite{B3,JanK}, social systems \cite{[5]}, behavioural economy \cite{[6]} or epidemics with gene mutations \cite{[7]}.

The mathematical study of chemotaxis dated back to as early as the work of Patlak \cite{Patlak} in 1953,  and  further Keller and Segel  derived one of the best studied models in mathematical biology   at the macroscopic  population level \cite{KS,KS1,KS2}. We refer \cite{HP} and references therein on this model. The famous Keller-Segel model has successfully  explained chemotactic phenomena in slowing changing environments \cite{Tin}. On the other hand,  
in order to understand bacterial behavior at the individual level, the Othmer-Dunbar-Alt kinetic model  was proposed by Alt  (1980) \cite{Alt} and Othmer et al. (1988) \cite{ODA} for the description of the chemotactic movement of cells in the presence of a chemical substance, and the Keller-Segel model can be derived by taking the hydrodynamic limit of kinetic models (see  \cite{Cha,DS,Fil,JV} and references therein).  In the conventional Othmer-Dunbar-Alt kinetic models,  a (biased) velocity jump assumption in the turning kernel is always assumed.  \\



 
  


This paper is concerned with the study of the kinetic chemotaxis model with internal dynamics \eqref{intr1}. Our first goal is to prove existence of global solutions for the Cauchy problem of \eqref{intr1} under some quite general assumptions. 
We   assume the initial condition
\beq
p_{0}(x,v,m)\geq 0, \quad  (1+ \langle x \rangle )p_{0}(x,v,m) \in L^{1} (\mathbb{R}^d_x \times V \times \mathbb{R}^{+}_m), 
 \quad p_{0}(x,v,m) \in L^{\infty} (\mathbb{R}^d_x \times V \times \mathbb{R}^{+}_m),
\label{as:in1}
\eeq 
where $\langle x \rangle = \sqrt{1+x^2}$, and
\beq
\bar p_{0}(x,v) := \int_0^\infty p_{0}(x,v,m) dm  \in L^{\infty}(\mathbb{R}^d_x \times V).
\label{as:in2}
\eeq
We will use the notation
$$
\bar{p} (x,v,t) := \int_{0}^{\infty} p  (x,v,m,t)  dm .
$$
 
The assumptions on $F(m,S)$ and $T(v,v',m)$ are:
\begin{itemize}
\item Assumption on the adaptation rate $F$. There  exists a non-negative, non-decreasing continuous function $\Pi(\cdot)\in C(\mathbb{R})$ 
such that
\beq\label{frl8}
 | \p_{m}   F(m,S)| \leq   \Pi(S).       
\eeq
\item Assumption   on the turning kernel $T$. The turning kernel $T$ is positive and uniformly bounded: there exist a positive constant $C_{\mathcal{T}}$ such that 
\beq\label{as:T}
0< T(v,v',m) \leq C_{\mathcal{T}}. 
\eeq
\end{itemize}
Under the above assumptions, we prove the global existence of weak solution to the system \eqref{intr1}, see Theorem \ref{global} in Section \ref{section2}.
However, the Schauder fixed point theorem does not provide uniqueness.  
With further assumptions on the coefficients and initial data,  the regularity and uniqueness of the global weak solution are also considered in Section \ref{reguni}. \\

The second goal is to study the fast adaptation limit as $\e \to 0 $ of  \eqref{intr1}. For the reasons below (compatible with \cite{STY,Si}), we make  further assumption on $F$ that
\beq \label{as:F1}
\begin{array}{rl}
&\text{There exist positive constants } m_{\pm} \text{ and } m_0(S ),  m_{-}< m_0(S ) <  m_+ ,  \text{  such that }\vspace{2mm} \\
&F(m, S ) > 0 \quad \text{for } \; m < m_0(S ), \qquad   F(m, S ) < 0 \quad \text{for } \; m > m_0(S ).
\end{array}
\eeq
And to simplify the proof,  we also assume the initial data does not   concentrate on large $m$, i.e.,
\beq\label{mar11-4}
  \int_{0}^{\infty}\int_{\mathbb{R}^d_x} \int_V  m p_0 dv dx dm  < \infty.
\eeq
Under the above assumptions, we  study in Section \ref{section3}, as $\e \to 0 $, how the kinetic chemotaxis model with internal dynamics \eqref{intr1} is close to the Othmer-Dunbar-Alt type kinetic model
$$
 \left\{
\begin{array}{rcl}
\p_t \bar{p} +   v\cdot\nabla_x\bar{p} 
&=&  {\dis \int_V } T(v,v',m_0(S))  \bar{p}(x,v',t) -  T(v',v,m_0(S))   \bar{p}(x,v,t) dv' ,  \\
 -\Delta S  +S &=& n (x,t):=  {\dis \int_{V}} \bar p (x,v,t) dv  .
\end{array}
\right.
$$
 The resulting limit, Theorem \ref{f13-1},   gives some  insight to the molecular origins of the  chemotaxis behaviour.\\

We conclude this introduction by mentioning  that, other types of scaling and related limits can be considered.   For example, based on both fast adaptation and stiff response, the paper \cite{PTV} studied how the path-wise gradient of chemotactic signal arises from intra-cellular molecular content. See further remarks in the conclusion section.




\section{Global existence of  solutions}\label{section2}

Existence of solutions for this type of problem has been considered in \cite{EH}. See also \cite{BC}. Here, we extend the results under more general assumptions. We also give uniform bounds which will be useful for later purpose. 
The parameter $\e$ does not play a role here thus we take $\e=1$ for simplicity and rewrite the original equation for $p$ as
\beq \label{frl1}
 \p_t p  +  v \cdot \nabla_x p  + \p_m [  F(m, S )   p ] = \int_V T(v,v',m) p (x,v',m,t) - T(v',v,m) p (x,v,m,t) dv',
\eeq
coupled with the steady elliptic equation for the  chemical signal $S $
\beq \label{frl2}
-\Delta S  +S  = n (x,t):= \int_{0}^{\infty}\int_{V} p (x,v,m,t) dv dm,
\eeq
and the boundary constraint 
\beq \label{frl3}
 p (x,v, m=0,t)=0, \qquad p (x,v, m=+\infty,t)=0.
\eeq

Note that from Equation \eqref{frl2}, we have
 $$
 S (x,t) = G*n (x,t),  $$  
where $G$ is the Bessel potential. For later use, we recall some properties of the Bessel potential in below.

\begin{proposition}\label{BP}
 (Properties of Bessel potential  \cite{S}) 

(P1) $G(x)\geq 0$, $\|G\|_{L^1(\mathbb{R}^d)} = 1$,

(P2)  For any spatial dimension $d$, 
 $$ G(x) \in L^q(\mathbb{R}^d), \quad \forall 1\leq q< {d \over d-2},        
 \quad (\text{we understand }{d \over d-2}= \infty \text{ when }d=1,2),  $$
 
 (P3)  For any $\alpha\in \mathbb{R}$, $\beta>0$, $$\int_{|x|>1}   |x|^{\alpha}  G^\beta(x)  dx<\infty, $$

(P4)  For any spatial dimension $d$,  
  $$\nabla G(x) \in  L^1(\mathbb{R}^d).$$
 
\end{proposition}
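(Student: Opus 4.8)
The plan is to reduce all four statements to the explicit subordination representation of $G$, obtained by writing the Fourier symbol $\widehat G(\xi)=(1+|\xi|^2)^{-1}$ as $\int_0^\infty e^{-t(1+|\xi|^2)}\,dt$ and inverting term by term against the Gaussian heat kernel. This yields
\beq
G(x)=\int_0^\infty e^{-t}\,\frac{1}{(4\pi t)^{d/2}}\,e^{-|x|^2/(4t)}\,dt,
\eeq
a formula from which every property can be read off. Positivity in (P1) is immediate since the integrand is strictly positive, and the normalization follows from Tonelli: $\int_{\mathbb{R}^d}G\,dx=\int_0^\infty e^{-t}\big(\int_{\mathbb{R}^d}(4\pi t)^{-d/2}e^{-|x|^2/(4t)}\,dx\big)\,dt=\int_0^\infty e^{-t}\,dt=1$, so $\|G\|_{L^1}=1$.

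The next step is to extract the two relevant asymptotic regimes from this integral. For small $|x|$ the integral is dominated by small $t$, and a standard Laplace-type computation recovers the Newtonian singularity $G(x)\sim c_d|x|^{-(d-2)}$ when $d\geq 3$, a logarithmic singularity when $d=2$, and a bounded (indeed continuous) kernel when $d=1$. For large $|x|$, a saddle-point analysis of $\int_0^\infty e^{-t-|x|^2/(4t)}t^{-d/2}\,dt$, whose exponent $t+|x|^2/(4t)$ is minimized at $t=|x|/2$ with value $|x|$, gives exponential decay $G(x)\lesssim e^{-c|x|}$ for any $c<1$ (more precisely $G(x)\sim c\,|x|^{(d-3)/2}e^{-|x|}$). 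Differentiating under the integral sign gives $\nabla G(x)=-\int_0^\infty e^{-t}\frac{x}{2t}(4\pi t)^{-d/2}e^{-|x|^2/(4t)}\,dt$, whose small-$|x|$ behavior is $|\nabla G(x)|\sim|x|^{-(d-1)}$ and whose large-$|x|$ behavior inherits the same exponential decay.

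With these asymptotics the four claims are routine. For (P2) the only possible obstruction to $G\in L^q$ is the origin: $\int_{|x|<1}|x|^{-q(d-2)}\,dx<\infty$ exactly when $q(d-2)<d$, i.e. $q<d/(d-2)$, while for $d\leq 2$ the at-worst-logarithmic singularity lies in every $L^q_{\mathrm{loc}}$; the exponential tail makes $\{|x|>1\}$ harmless for all $q$. For (P3), on $\{|x|>1\}$ the exponential decay of $G^\beta$ dominates any polynomial weight $|x|^\alpha$, so the integral converges for all $\alpha\in\mathbb{R}$ and $\beta>0$. For (P4), the bound $|\nabla G(x)|\sim|x|^{-(d-1)}$ is integrable near the origin since $\int_{|x|<1}|x|^{-(d-1)}\,dx\sim\int_0^1 dr<\infty$, and the exponential tail handles infinity, giving $\nabla G\in L^1$. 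The only genuinely technical point is making the large-$|x|$ (and small-$|x|$) asymptotics rigorous from the subordination integral; since these are classical and recorded in \cite{S}, I would cite them rather than redo the saddle-point estimates in detail.
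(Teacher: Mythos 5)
Your proposal is correct and takes essentially the same route as the paper: the paper's ``proof'' simply cites Stein \cite{S} and records the same two facts you establish --- the singularity $G(x)\sim |x|^{-(d-2)}$ at the origin ($\log|x|$ for $d=2$, bounded for $d=1$, with the analogous $|x|^{-(d-1)}$ bound for $\nabla G$) together with exponential decay as $|x|\to\infty$ --- and then reads off (P2)--(P4) exactly as you do, your subordination formula $G(x)=\int_0^\infty e^{-t}(4\pi t)^{-d/2}e^{-|x|^2/(4t)}\,dt$ being precisely the standard device by which Stein derives those asymptotics (and also gives (P1) at once). One harmless slip in your parenthetical: the sharp large-$|x|$ asymptotic is $G(x)\sim c\,|x|^{(1-d)/2}e^{-|x|}$, not $|x|^{(d-3)/2}e^{-|x|}$ (check $d=1$, where $G(x)=\frac{1}{2}e^{-|x|}$), but since your argument only invokes the crude bound $G(x)\lesssim e^{-c|x|}$ for some $c>0$, nothing in the proof is affected.
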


  We briefly mention that, 
for $d=1$, $ G(x) = \frac{1}{2} e^{-|x|}$. 
For $d=2$, $G(x)$  has the singularity of $log|x|$ at $|x|\rightarrow 0$. 
In the case  $d\geq 3$,  the only singularity of $G(x)$ at $x=0$ behaves  as  
$$
G(x) \sim  |x|^{-(d-2)} \quad \text{as}\quad |x|\rightarrow 0.
$$
Also note that $G(x)$ decreases exponentially as $|x|\rightarrow \infty$,  therefore we   obtain  $(P2)$-$(P4)$.

\subsection{Statement of the result}

\begin{theorem}\label{global} (Global existence)
Under the assumption \eqref{frl8} on $F$,  \eqref{as:T} on $T$, consider system \eqref{frl1}-\eqref{frl3} with initial data $p_0$ satisfying  \eqref{as:in1}-\eqref{as:in2},  there exists a global weak  solution $(p, S)$ that   
\beq\label{prop:ps}
\|  S (\cdot,t) \|_{L^{1}\cap L^{\infty} (\mathbb{R}^d_x)} \leq  \|p_{0}\|_{L^{1}} +
  V_d  \|\bar p_{0}\|_{L^{\infty}} (1+2V_dC_{\mathcal{T}}t e^{2V_dC_{\mathcal{T}}t}), \quad \forall t>0,
\eeq
\beq\label{prop:p1}
\|  p (\cdot,\cdot,\cdot,t) \|_{L^{1}\cap L^{\infty} (\mathbb{R}^d_x \times V \times \mathbb{R}^{+}_m)} \leq  
\|p_{0}\|_{L^{1}} +
\|p_{0}\|_{  L^{\infty}}  \big( 1+ \tilde{C} t e^{\tilde{C} t}  \big), \quad \forall t>0,   
\eeq
where $V_d$ is the volume of   $V$, $\tilde{C}>0$ is a constant depends  on the estimate of $S$, 
$C_{\mathcal{T}}$ and initial data.
Moreover, 
\beq\label{prop:p2}
\|  \bar{p} (\cdot,\cdot,t) \|_{L^{\infty} (\mathbb{R}^d_x \times V  )} \leq  \|\bar p_{0}\|_{L^{\infty}}  (1+2V_dC_{\mathcal{T}}t e^{2V_dC_{\mathcal{T}}t}),\quad \forall t>0,
\eeq
\beq\label{nlinf}
\| n (\cdot,t) \|_{L^{\infty} (\mathbb{R}^d_x  )} \leq V_d \|\bar p_{0}\|_{L^{\infty}}  (1+2V_dC_{\mathcal{T}}t e^{2V_dC_{\mathcal{T}}t}),\quad \forall t>0.
\eeq

\end{theorem}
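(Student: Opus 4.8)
The plan is to build the solution through a Schauder fixed point argument with the chemical potential $S$ as the fixed-point variable, exploiting the structural fact that integrating \eqref{frl1} in the internal variable $m$ removes the adaptation flux. I would introduce the map $\Phi:S\mapsto\tilde S$ as follows: given $S$ in a suitable closed convex set $\mathcal K$, freeze $F(m,S)$ as a known coefficient and solve the \emph{linear} kinetic equation \eqref{frl1} with boundary data \eqref{frl3}. For fixed $v$ this is a linear transport equation in $(x,m)$ perturbed by the bounded integral turning operator, solvable by the method of characteristics together with a Duhamel/contraction argument; the Lipschitz bound \eqref{frl8} makes the characteristic flow $\dot X=v$, $\dot M=F(M,S(X,t))$ well defined. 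Nonnegativity of the resulting $p$ is preserved because the turning operator splits into a nonnegative gain $\int_V T(v,v',m)p(x,v',m,t)\,dv'$ and a nonnegative linear loss $\lambda(v,m)=\int_V T(v',v,m)\,dv'$. Setting $n:=\int_0^\infty\int_V p\,dv\,dm$ and $\tilde S:=G*n$ closes the loop, and a fixed point of $\Phi$ is precisely a weak solution of \eqref{frl1}--\eqref{frl3}.

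The a priori estimates that both define $\mathcal K$ and give the stated bounds would be derived in this order. Integrating \eqref{frl1} over all of $(x,v,m)$, using \eqref{frl3} to annihilate the $m$-flux and the antisymmetry of the turning term to annihilate the collision integral, yields exact mass conservation $\|p(\cdot,t)\|_{L^1}=\|p_0\|_{L^1}$, hence $\|n(\cdot,t)\|_{L^1}=\|p_0\|_{L^1}$ and, since $\|G\|_{L^1}=1$ by (P1), $\|S(\cdot,t)\|_{L^1}\le\|p_0\|_{L^1}$. The crucial step is to integrate \eqref{frl1} in $m$ alone: the boundary conditions \eqref{frl3} again kill the flux $[F p]_{m=0}^{m=\infty}$, so $\bar p$ obeys a closed transport equation \emph{independent of $F$}; reading it along the characteristics $x\mapsto x+vt$ and using $0<T\le C_{\mathcal T}$ gives $\tfrac{d}{dt}\bar p\le C_{\mathcal T}\int_V\bar p\,dv'\le V_d C_{\mathcal T}\|\bar p\|_{L^\infty}$, and Gronwall produces \eqref{prop:p2}; then $\|n\|_{L^\infty}\le V_d\|\bar p\|_{L^\infty}$ and $\|S\|_{L^\infty}\le\|G\|_{L^1}\|n\|_{L^\infty}\le\|n\|_{L^\infty}$ give \eqref{nlinf} and the $L^\infty$ part of \eqref{prop:ps}. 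Finally, with $\|S\|_{L^\infty}$ under control I would estimate $p$ itself along the full characteristics $(\dot X,\dot M)=(v,F)$, where $\tfrac{d}{dt}p=-(\p_m F)p+\text{gain}-\lambda p$; the bound $|\p_m F|\le\Pi(\|S\|_{L^\infty})$ together with the monotonicity of $\Pi$ gives a second Gronwall estimate with constant $\tilde C$ of order $\Pi(\|S\|_{L^\infty})+V_d C_{\mathcal T}$, which is \eqref{prop:p1}.

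For the fixed point I would take $\mathcal K$ to be the closed convex set of $S$ satisfying the bound \eqref{prop:ps} on a fixed interval $[0,T]$, so that the estimates above give $\Phi(\mathcal K)\subseteq\mathcal K$. The compactness of $\Phi$ comes from the elliptic step alone: $n$ bounded in $L^1\cap L^\infty$ forces $S=G*n$ into a bounded set of $W^{2,q}$ by elliptic regularity, which is spatially precompact, while integrating \eqref{frl1} in $(v,m)$ gives the continuity equation $\p_t n+\nabla_x\cdot\!\int_0^\infty\!\int_V v\,p\,dv\,dm=0$ with flux bounded in $L^1\cap L^\infty$, so $\p_t n$ is bounded in a negative Sobolev space and $S$ is equicontinuous in $t$; Aubin--Lions (or Arzel\`a--Ascoli) then makes $\Phi(\mathcal K)$ precompact in, say, $C([0,T];L^p_{loc})$. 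Continuity of $\Phi$ follows because $S_k\to S$ forces $F(m,S_k)\to F(m,S)$ locally uniformly, hence convergence of the characteristic flows, of the linear densities $p[S_k]$, and of $n[S_k]$ and $\Phi(S_k)$. After a standard regularization (e.g. truncating the internal variable to $m\in[0,R]$ and removing it via the $R$-uniform bounds), Schauder's theorem supplies a fixed point on $[0,T]$; since all constants are finite for every $T$, the solution and its bounds are global, and the a priori estimates specialize to \eqref{prop:ps}--\eqref{nlinf}.

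\textbf{Main obstacle.} The genuinely delicate point is the continuity and compactness of $\Phi$ in the presence of the internal-dynamics flux $\p_m[F(m,S)p]$. Unlike $x$ and $t$, the variable $m$ enjoys no smoothing, so the nonlinear product $F(m,S_k)p[S_k]$ must be passed to the limit using only the strong, locally uniform convergence of $S_k$ supplied by the elliptic regularization against a merely weak-$*$ limit of the densities; one must simultaneously check that the boundary conditions \eqref{frl3}, which underlie every one of the a priori estimates, are stable under the limit so that the $m$-flux keeps integrating to zero. Securing space--time compactness of $S_k$ with enough uniformity to control the characteristic flow $\dot M=F(M,S_k)$, and hence the transported density, is where the argument is heaviest; the additional hypotheses \eqref{as:F1} and \eqref{mar11-4} are not needed here, since they enter only in the fast-adaptation limit of Section \ref{section3}.
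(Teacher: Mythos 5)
Your construction follows the paper's own strategy almost verbatim: a Schauder fixed point in the variable $S$, the linear kinetic equation solved along the characteristics $(\dot X,\dot M)=(v,F)$ with $S$ frozen, the identical chain of a priori estimates (mass conservation; $L^\infty$ of $\bar p$ after integrating out $m$, by Duhamel and Gronwall; then $n$ and $S$ by Young's inequality with $\|G\|_{L^1}=1$; then $L^\infty$ of $p$ along the full characteristics with constant of order $\Pi(\max|S|)+2V_dC_{\mathcal T}$), and compactness of $S=G*n$ obtained from elliptic regularity in space together with an estimate on $\partial_t S$ coming from the continuity equation $\partial_t n+\nabla_x\cdot\int_0^\infty\int_V vp\,dv\,dm=0$ (this is the paper's Claim 2, proved there by elliptic regularity for $\partial_t\Sigma$ — the two versions are equivalent). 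The one place where your argument genuinely falls short is the compactness of the map $\Phi$.

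You assert that $\Phi(\mathcal K)$ is precompact ``in, say, $C([0,T];L^p_{loc})$'' and then invoke Schauder. But the Schauder theorem in the form used here lives in a \emph{normed} space, and the convex set $\mathcal K$ defined by the bounds \eqref{prop:ps} must be bounded, convex and closed, with $\Phi(\mathcal K)$ precompact, \emph{in that same norm}; the paper works in $X=L^{1}([0,T_{0}]\times\mathbb{R}^{d}_{x})$. Local compactness does not give precompactness in global $L^1$: translates $n_k(x)=n_0(x-ke_1)$ are uniformly bounded in $L^1\cap L^\infty$ and their potentials $G*n_k$ are uniformly smooth, yet no subsequence converges in $L^1(\mathbb{R}^d)$. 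What is missing is a tightness estimate preventing mass from escaping to $|x|=\infty$, uniformly over $\mathcal K$. This is exactly the paper's Claim 3: multiplying \eqref{frl1} by $\langle x\rangle$ and integrating yields $\int\langle x\rangle n\,dx\le\|\langle x\rangle p_0\|_{L^1}+t\|p_0\|_{L^1}$, hence $\int\langle x\rangle\Sigma\,dx<\infty$, which gives the tail control \eqref{sch8} needed to apply the $L^1$ strong-compactness criterion of Brezis (Riesz--Fr\'echet--Kolmogorov with control at infinity). Note that this is precisely where the weighted hypothesis $(1+\langle x\rangle)p_0\in L^1$ in \eqref{as:in1} is used — an assumption your proof never invokes, which is the tell-tale sign of the missing step (your remark that no hypotheses beyond \eqref{frl8}, \eqref{as:T} are needed is therefore not consistent with the argument you run). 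The gap is repairable — by the paper's moment argument, or by a finite-speed-of-propagation tightness estimate exploiting the compactness of $V$, or by recasting the fixed point in a Fr\'echet space via Schauder--Tychonoff at the cost of reproving closedness and continuity of $\Phi$ in that topology — but as written, your compactness step does not produce a set that is precompact in the space in which your fixed-point set $\mathcal K$ lives.
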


We recall that the global existence result was considered in \cite{EH}, with more precise examples on $T$ and $F$. They take the turning kernel $T$ as a product of the turning frequency $\lambda$ and the kernel $K$, i.e.,
$$
T(v,v',m) = \lambda(m)K(v,v',m), 
$$
in which the kernel $K$ is non-negative and satisfies the normalisation condition
$$
\int_{V} K(v,v',m)dv=1,
$$
where $V$ is a symmetric compact set in $\mathbb{R}^{d}$. The simplest  example of $K$ is 
$$
 K(v,v',m) = {1\over V_d},
$$
to assume constant turning probability of changing velocity from $v'$ to $v$, or 
$$
 K(v,v',m) = k{\theta}, \quad cos(\theta)= {v\cdot v' \over |v| |v'|},
$$
to assume that the turing kernel is a function of the angle between original and new velocities. More general, one can assume that it is uniformly bounded by a constant , i.e.,
$$
| K(v,v',m)| \leq C.
$$
For the (output) turning frequency $\lambda(m)$, which is related to the  (input) signal function seen by a cell, to prevent formation of singularities, the authors in \cite{EH} suppose the growth estimate 
$$
\lambda(m) \leq C\Big(1+ \Lambda(S) + | {\p S \over \p t} +  v\cdot {\p S\over \p x}|\Big), \qquad \forall x, \; v, \; t,
$$
where $\Lambda \in C(\mathbb{R^+})$ is a non-negative, non-decreasing continuous function. An assumption that, as we see it below, is not necessary for our mathematical treatment. 
 \bigskip

\subsection{A priori estimates}

  \begin{itemize}
\item  $L^1$ bounds on $p$, $\bar p$ and $n$.
\end{itemize}

 Take the integration of \eqref{frl1} with respect to $x,v,m$ variables, we have the mass conservation
$$
{d\over dt}\int_{\mathbb{R}^d_x}\int_{0}^{\infty}\int_{V}  p (x,v,m,t)  dv dm  dx\equiv 0,
$$
which is also
$$
\| p (\cdot, \cdot,\cdot, t) \|_{L^{1}(\mathbb{R}^d_x \times V\times \mathbb{R}^+_m )}
 =  \| \bar p (\cdot,\cdot, t) \|_{L^{1}(\mathbb{R}^d_x\times V)}
= \| n (\cdot, t) \|_{L^{1}(\mathbb{R}^d_x)} 
$$
\beq\label{nl1}
\equiv   \int_{\mathbb{R}^d_x}
 \int_{0}^{\infty}\int_{V} p_{0}(x,v,m) dv dmdx = \|p_{0}\|_{L^{1}}.
\eeq

  \begin{itemize}
\item  $L^\infty$ bounds on $\bar p$ and $n$.
\end{itemize}

On the other hand, take the integration of \eqref{frl1} only with respect to $m$ over $[0,\infty)$ and use the boundary condition \eqref{frl3}, we have
$$
 \p_t \bar p  +  v \cdot \nabla_x \bar p   = \int_{0}^{\infty} \int_V T(v,v',m) p (x,v',m,t) - T(v',v,m) p (x,v,m,t) dv' dm,
$$
then it can be further represented by 
$$
\begin{array}{rcl}
  \bar p   (x,v,t) & =&  \displaystyle \bar p _{0}  (x-vt,v)    + \int_{0}^{t} \int_{0}^{\infty} \int_V  \big\{T(v,v',m) p (x-v(t-s),v',m,s)
 \vspace{2.5mm}  \\ 
& &  ~\hspace{4cm} \displaystyle  - T(v',v,m) p (x-v(t-s),v,m,s)\big\} dv' dm ds.
    \end{array}
$$
Recall the initial condition \eqref{as:in2} and assumption \eqref{as:T} on $T$,  take the $L^{\infty}(\mathbb{R}^d_x\times V)$ norm on both sides of the above equation to get
$$
 \|  \bar{p} (\cdot,\cdot,t) \|_{L^{\infty} (\mathbb{R}^d_x \times V  )} \leq 
 \|\bar p_{0}\|_{L^{\infty}} + 2 V_d C_{\mathcal{T}} \int_{0}^{t} \|  \bar{p} (\cdot,\cdot,s) \|_{L^{\infty} (\mathbb{R}^d_x \times V  )}  ds.
$$
 Then, apply Gronwall's inequality, we readily find  \eqref{prop:p2}. 
 
 Next, from the definition of $n$, we have that
$$
\| n (\cdot, t) \|_{L^{\infty}(\mathbb{R}^d_x)}
\leq   V_d   \| \bar{p} (\cdot,\cdot, t) \|_{L^{\infty}(\mathbb{R}^d_x\times V)} ,
$$
thus we have \eqref{nlinf}.

   \begin{itemize}
\item  $L^1$ and $L^\infty$ bounds on $S$.
\end{itemize}

  Recall that  $  S (x,t) = G*n (x,t) $,   by Young's inequality for convolution, it is direct that
\beq\label{sl1}
\| S (\cdot, t) \|_{L^{1}(\mathbb{R}^d_x)} \leq  \|G\|_{L^{1}(\mathbb{R}^d_x)} \| n (\cdot, t) \|_{L^{1}(\mathbb{R}^d_x)} 
= \|p_{0}\|_{L^{1}},
\eeq
where \eqref{nl1} is used,  and
$$
\| S (\cdot, t) \|_{L^{\infty}(\mathbb{R}^d_x)} \leq \|G\|_{L^{1}(\mathbb{R}^d_x)} \| n (\cdot, t) \|_{L^{\infty}(\mathbb{R}^d_x)}
\leq V_d \|\bar p_{0}\|_{L^{\infty}} (1+2V_dC_{\mathcal{T}}t e^{2V_dC_{\mathcal{T}}t}), 
$$
in which \eqref{nlinf} is used. Then this inequality and \eqref{sl1} gives the estimate \eqref{prop:ps}.   

  \begin{itemize}
\item  $L^\infty$ bound on $  p$.
\end{itemize}

Now we introduce the characteristics of Equation \eqref{frl1}  by
\beq\label{mar11-3}
{d\mathbf{X}\over ds} = v,\quad 
{d\mathbf{V}\over ds }= 0,\quad 
{d\mathbf{M}\over ds} =  F(\mathbf{M}(s),S (\mathbf{X}(s),s)),
\eeq
and along time-backward characteristics starting at $(x,v,m,t)$, we have for $0\leq s\leq t$,
$$
\mathbf{X}(s) = x-v(t-s), \quad \mathbf{M}(s) = m - \int_{s}^{t}F(\mathbf{M}(\tau),S (\mathbf{X}(\tau),\tau)) d\tau,
$$
 integrate \eqref{frl1} along the characteristic from $0$ to $t$ we get
$$
p (x,v,m,t) = p_{0}(\mathbf{X}(0), v, \mathbf{M}(0)) -  
\int_{0}^{t}  \p_{m}   F(\mathbf{M}(\tau),S (\mathbf{X}(\tau),\tau)) 
p (\mathbf{X}(\tau), v, \mathbf{M}(\tau), \tau)  d\tau
$$
\beq\label{eq:p}
+\int_{0}^{t}\int_{V} T(v,v', \mathbf{M}(\tau)) p (\mathbf{X}(\tau), v', \mathbf{M}(\tau), \tau) 
- T(v',v, \mathbf{M}(\tau))   p (\mathbf{X}(\tau), v, \mathbf{M}(\tau), \tau)  dv' d\tau .
\eeq
Now,  take the $L^\infty( \mathbb{R}^d_x \times V \times \mathbb{R}^{+}_m )$ norm on both sides, and under the assumptions \eqref{frl8}-\eqref{as:T}, we have
$$
\|  p (\cdot,\cdot,\cdot,t) \|_{L^{\infty} (\mathbb{R}^d_x \times V \times \mathbb{R}^{+}_m)} \leq \|  p_{0} \|_{L^{\infty} (\mathbb{R}^d_x \times V \times \mathbb{R}^{+}_m)}
$$
\beq\label{j14-1}
+ \big[   \Pi(\max\limits_{0\leq s\leq t}| S(\mathbf{X}(s),s )|)  +  2V_d C_{\mathcal{T}} \big]
\int_{0}^{t}   \|  p (\cdot,\cdot,\cdot,\tau) \|_{L^{\infty} (\mathbb{R}^d_x \times V \times \mathbb{R}^{+}_m)} d\tau,
\eeq
applying Gronwall's inequality again, we obtain  \eqref{prop:p1}.

\subsection{Proof of Theorem \ref{global} }

 We use the Schauder fixed point theorem to  prove   global existence.

\begin{theorem}\label{schauder} (The Schauder fixed point theorem \cite{GT,JS})
Let $X$ be a normed vector space, and let $K\subset X$ be a non-empty, bounded, and convex set. Then for any given continuous mapping $\phi: K \rightarrow K$, with $\phi(K)$ being pre-compact,   there exists a fixed point $x\in K$ such that $\phi(x)=x$.

\end{theorem}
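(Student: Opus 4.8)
The plan is to reduce the infinite-dimensional assertion to the finite-dimensional Brouwer fixed point theorem, which I take as the known base case, via the classical Schauder-projection (finite-dimensional approximation) argument. The entire difficulty is concentrated in the approximation scheme and the passage to the limit, and the precompactness of $\phi(K)$ is precisely what makes both steps work.

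\emph{Step 1 (finite-dimensional approximation of the range).} Since $\phi(K)$ is precompact, its closure is compact, hence totally bounded, so for every $\e>0$ there is a finite $\e$-net $\{y_1,\dots,y_n\}\subset\phi(K)$ whose balls $B(y_i,\e)$ cover $\phi(K)$. I would then introduce the Schauder projection $P_\e\colon \phi(K)\to X$ by
\beq
P_\e(y)=\f{\sum_{i=1}^n \mu_i(y)\,y_i}{\sum_{i=1}^n \mu_i(y)},\qquad \mu_i(y)=\max\{0,\,\e-\|y-y_i\|\},
\eeq
the denominator never vanishing because the $y_i$ form an $\e$-net. Two facts are immediate and form the heart of the construction. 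First, $P_\e(y)$ is a convex combination of the $y_i$, so it lands in $K_\e:=\mathrm{conv}\{y_1,\dots,y_n\}$, a compact convex subset of the finite-dimensional subspace $\mathrm{span}\{y_1,\dots,y_n\}$; moreover each $y_i\in\phi(K)\subset K$ and $K$ is convex, so $K_\e\subset K$. Second, only the indices with $\|y-y_i\|<\e$ contribute, so that $\|P_\e(y)-y\|\le\e$ for all $y\in\phi(K)$.

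\emph{Step 2 (approximate fixed points via Brouwer).} Consider the continuous map $\phi_\e:=P_\e\circ\phi\colon K_\e\to K_\e$. Because $K_\e$ is a compact convex set lying in a finite-dimensional space, the Brouwer fixed point theorem supplies a point $x_\e\in K_\e$ with $\phi_\e(x_\e)=x_\e$. Combined with the estimate of Step 1 this gives an approximate fixed point of $\phi$ itself,
\beq
\|x_\e-\phi(x_\e)\|=\|P_\e(\phi(x_\e))-\phi(x_\e)\|\le\e.
\eeq

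\emph{Step 3 (passage to the limit and the main obstacle).} I would take $\e=\e_k\to0$. The points $\phi(x_{\e_k})$ all lie in the precompact set $\phi(K)$, so after extracting a subsequence we have $\phi(x_{\e_k})\to z$ for some $z$ in the compact closure of $\phi(K)$. The bound $\|x_{\e_k}-\phi(x_{\e_k})\|\le\e_k\to0$ forces $x_{\e_k}\to z$ as well; since $z\in\overline{\phi(K)}\subset K$ (using that $K$ is closed, as is standard for this theorem) the continuity of $\phi$ yields $\phi(x_{\e_k})\to\phi(z)$, and matching the two limits gives $\phi(z)=z$. The genuinely non-trivial part of this scheme is the finite-dimensional reduction in Steps 1--2: producing an operator that is uniformly $\e$-close to the identity on the range while pushing everything into a compact convex finite-dimensional set on which Brouwer applies. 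Everything else (continuity of $P_\e$, the choice of $\e$-net, the diagonal extraction) is routine once precompactness of $\phi(K)$ is available, and it is exactly this precompactness of the image---rather than compactness of $K$ itself---that the argument exploits.
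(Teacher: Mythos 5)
The paper does not actually prove this statement: it is quoted as a classical result with pointers to the references \cite{GT,JS}, so there is no in-paper argument to compare against. Your proof is the standard Schauder-projection reduction to Brouwer (finite $\varepsilon$-net, the projection $P_\varepsilon$ with $\|P_\varepsilon(y)-y\|\le\varepsilon$, Brouwer on $K_\varepsilon=\mathrm{conv}\{y_1,\dots,y_n\}\subset K$, then approximate fixed points and a compactness extraction), and it is correct as written --- indeed it is essentially the proof found in the cited sources. Two remarks. First, your route through approximate fixed points is the right choice for the hypotheses at hand: the alternative of restricting $\phi$ to $\overline{\mathrm{conv}}\,\phi(K)$ and invoking a compact-convex fixed point theorem directly would require that closed convex hull to be compact, which by Mazur's theorem needs completeness of $X$, whereas the theorem is stated for a bare normed space; total boundedness of the hull alone would not do. Second, your parenthetical assumption in Step 3 that $K$ is closed is not cosmetic but a genuine (and necessary) repair of the statement as printed, which omits it: without closedness of $K$ (or the stronger hypothesis $\overline{\phi(K)}\subset K$) the assertion is false --- take $X=\mathbb{R}$, $K=(0,1]$, $\phi(x)=x/2$, so that $K$ is nonempty, bounded and convex, $\phi$ is continuous with $\phi(K)=(0,\tfrac12]$ precompact, yet the only candidate fixed point $0$ lies outside $K$. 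With that hypothesis supplied, your limit point $z$ satisfies $z\in\overline{\phi(K)}\subset K$, continuity applies at $z$, and the argument is complete; this is also how the theorem is used in the paper, since there the set $K$ of admissible $S$ is defined by closed norm constraints.
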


To apply this theorem, we separate the presentation into three steps.\\

\noindent{\bf Step 1.  Setting of the function space and the mapping.}
Fix   $T_{0}>0$, we define 
 $$
  X:=   L^{1} ([0,T_{0}] \times \mathbb{R}^{d}_{x} ),
 $$
and take a bounded convex subset $K$ in $X$ defined by
$$
K= \big\{S\in X \big|S\geq 0, \|S(\cdot, t)\|_{L^{1}} \leq  \| p_{0}\|_{L^{1}}, \
\|S(\cdot, t)\|_{L^{\infty}} \leq       V_d \|\bar p_{0}\|_{L^{\infty}}  (1+2V_d C_{\mathcal{T}}T_0 e^{2V_d C_{\mathcal{T}}T_0}),
   \   \forall t\in [0,T_0] \big\} . 
$$
Then we define the mapping $\phi$. We  start from a function $S \in K $, and construct  $p \in  L^{\infty} ([0,T_{0}], L^{1} (\mathbb{R}^{d}_{x}\times V \times R^{+}_{m}) )$ according to \eqref{frl1} with $S$ fixed. This is possible because it is a linear operator and the characteristics are well defined according to \eqref{mar11-3}.

Next, the integration of $p$ with respect to $(v,m)$ defines $n\in  L^{\infty} ([0,T_{0}], L^{1}   (\mathbb{R}^{d}_{x}) )$. By the above a priori estimate \eqref{nl1} and \eqref{nlinf}, we have
 $$
  \|n(\cdot, t)\|_{L^{1}  (\mathbb{R}^{d}_{x}) }
  = \| p_{0}\|_{L^{1}}, \quad\quad\quad \forall t\in [0,T_0],
 $$
 $$
  \|n(\cdot, t)\|_{L^{\infty}  (\mathbb{R}^{d}_{x}) }
  \leq  V_d \|\bar p_{0}\|_{L^{\infty}}  (1+2V_dC_{\mathcal{T}}t e^{2V_dC_{\mathcal{T}}t}),
   \quad \forall t\in [0,T_0].
 $$
Note that by interpolation, for any $q\in (1, \infty)$, 
 \beq\label{sch1}
  \|n(\cdot, t)\|_{L^{q}  (\mathbb{R}^{d}_{x}) }
  \leq   \|n(\cdot, t)\|_{L^{1} (\mathbb{R}^{d}_{x}) }^{1\over q}  
     \|n(\cdot, t)\|_{L^{\infty} (\mathbb{R}^{d}_{x}) }^{q-1\over q} .
 \eeq
  Further, we  construct  $\Sigma$ as the solution to  
  \beq\label{sch7}
   - \Delta \Sigma + \Sigma = n:= \int_{0}^{\infty}\int_{V} p (x,v,m,t) dv dm.
   \eeq
By using Besssel potential and Young's inequality for convolution (as the proof of \eqref{sl1}),  we have
$$
 \|\Sigma(\cdot, t)\|_{L^{1}} \leq  \| p_{0}\|_{L^{1}},  \quad
\|\Sigma(\cdot, t)\|_{L^{\infty}} \leq  V_d \|\bar p_{0}\|_{L^{\infty}}  (1+2V_dC_{\mathcal{T}}T_0 e^{2V_dC_{\mathcal{T}}T_0}),
   \quad \forall t\in [0,T_0],
$$
thus it holds that $\Sigma \in K$, therefore  we have defined a mapping  on $K$ that
$$ \phi:   S \mapsto \Sigma. $$  

\noindent{\bf Step 2. Continuity}. To prove the continuity of the mapping $\phi: S \mapsto \Sigma$, we observe that, firstly, the mapping 
$$\phi_1: p \mapsto \Sigma$$
 defined by \eqref{sch7} is continuous, because it is a bounded linear operator from $L^q$ to $L^q$ for $1\leq q \leq \infty$.
Secondly, the mapping
$$\phi_2: S \mapsto  p$$ is also continuous, which can be seen exactly from the representation formula \eqref{eq:p} (and also \eqref{j14-1}). Indeed, the characteristics are uniquely defined and continuous with respect to parameters, although they are not Lipschitz, neither relevant for DiPerna-Lions theory \cite{DL}.

 In conclusion, the composition $\phi: S \mapsto \Sigma$ is   continuous.\\


\noindent{\bf Step 3. Compactness of the mapping}.
To use the Schauder fixed point theorem,  we  need further to prove that 
   \beq\label{sch9}
   \phi(K) \text{ is pre-compact  in } K   .
 \eeq
  
The proof of \eqref{sch9} consists of the following three claims. 

\vspace{3mm}
 {\bf  Claim 1.} Local compactness in space:   
$$\Sigma(\cdot, t) \in W^{2,q} (\mathbb{R}^{d}_{x}), \quad \forall t\in [0,T_0],\quad \forall 1<q<\infty. $$ 
This claim is clear by using standard elliptic regularity estimate \cite{GT}, since $\Sigma$ is defined by \eqref{sch7} and recall \eqref{sch1} that $n(\cdot, t)\in L^q  (\mathbb{R}^{d}_{x})$, for all $q>1$.

\vspace{3mm}
 {\bf  Claim 2.} Local compactness in time:   
$$\p_{t}\Sigma(\cdot, t) \in W^{1,q} (\mathbb{R}^{d}_{x}), \quad \forall t\in [0,T_0],\quad \forall 1<q<\infty. $$
This is because, take time derivative of the elliptic equation for $\Sigma$ we have
 $$
 - \Delta \p_{t}\Sigma + \p_{t}\Sigma = \p_{t}n = \nabla_{x}\cdot \int_{0}^{\infty} \int_{V} vp dvdm.
 $$
Note that for any $t$, the integration on the right hand side of above is in $ L^{q} (\mathbb{R}^{d}_{x})$, for any $q>1$, the standard elliptic regularity theory shows that $\p_{t}\Sigma \in W^{1,q} (\mathbb{R}^{d}_{x})$,
 and actually 
 $$
  \|\p_{t}\Sigma(\cdot, t)\|_{W^{1,q} (\mathbb{R}^{d}_{x}) } \leq 
 \bar C \| \int_{0}^{\infty} \int_{V} vp \ dvdm \|_{L^{q} (\mathbb{R}^{d}_{x}) }
 \leq  \bar C  \  diam(V)   \    \| \int_{V} \bar p dv \|_{L^{q} (\mathbb{R}^{d}_{x}) } 
 $$
 $$
   \leq  \tilde C  \    \| \int_{V} \bar p dv \|_{L^1\cap L^\infty (\mathbb{R}^{d}_{x}) }
   \leq \tilde C  \    \|\bar p \|_{L^1\cap L^\infty (\mathbb{R}^{d}_{x}\times V) },  \quad  \forall t\in[0,T_{0}],
 $$
 where $\bar C$ independent of $S$ is a constant given by elliptic estimate and $\tilde C$ is a genetic constant which differs from line to line.

\vspace{3mm}
 {\bf  Claim 3.} Control for  $|x|\sim \infty$:  
 \beq\label{sch2}
 \int_{\mathbb{R}^{d}_{x}} \langle x \rangle \Sigma dx  < \infty, \quad \forall t\in[0,T_{0}], 
 \quad \text{where}\quad \langle x \rangle := \sqrt{1+|x|^{2}}.
\eeq
For this, note that $\Sigma$ satisfies \eqref{sch7}, then
  \beq\label{sch5}
 \int_{\mathbb{R}^{d}_{x}} \langle x \rangle \Sigma dx = 
  \int_{\mathbb{R}^{d}_{x}} \langle x \rangle n dx +
   \int_{\mathbb{R}^{d}_{x}} \langle x \rangle \Delta \Sigma dx ,
 \eeq
in which the second term is bounded because
$$
  \int_{\mathbb{R}^{d}_{x}} \langle x \rangle \cdot \Delta \Sigma dx =
    \int_{\mathbb{R}^{d}_{x}} \Delta \langle x \rangle \cdot \Sigma dx =
 \int_{\mathbb{R}^{d}_{x}}  \big({d-1\over \langle x \rangle } + {1\over \langle x \rangle^{3} }\big) 
  \cdot \Sigma dx
\leq d\|\Sigma\|_{L^{1}(\mathbb{R}^{d}_{x})}.
$$
To control the first term on the right hand side of \eqref{sch5}, we multiply Equation \eqref{frl1} by $\langle x \rangle $ and integrate:
$$
 {d\over dt} \int_{\mathbb{R}^d_x} \int_{0}^{\infty}\int_{V} \langle x \rangle p dvdmdx + \int_{\mathbb{R}^d_x} \int_{0}^{\infty}\int_{V} \langle x \rangle v\cdot \nabla_{x}p dvdmdx=0.
$$
Then
$$
{d\over dt} \int_{\mathbb{R}^d_x}   \langle x \rangle n dx 
= {d\over dt} \int_{\mathbb{R}^d_x} \int_{0}^{\infty}\int_{V} \langle x \rangle p dvdmdx
= \int_{\mathbb{R}^d_x} \int_{0}^{\infty}\int_{V} {v\cdot x \over \langle x \rangle } p  dvdmdx
\leq  \| p_{0}\|_{L^{1}} .
$$
Therefore from the initial date \eqref{as:in1} we have the bound
$$
\int_{\mathbb{R}^d_x}   \langle x \rangle n dx \leq \int_{\mathbb{R}^d_x}   \langle x \rangle n_{0} dx
 +t \| p_{0}\|_{L^{1}} =   \|  \langle x \rangle p_{0}\|_{L^{1}} + t \| p_{0}\|_{L^{1}}, \  \forall t\in[0,T_{0}],
$$
then we have proved \eqref{sch2}, and further,
\beq\label{sch6}
\langle x \rangle \Sigma \in X = L^{1} ([0,T_{0}] \times \mathbb{R}^{d}_{x} ).
\eeq
The property \eqref{sch6} yields:
\beq\label{sch8}
\forall \e_{0}>0, \  \exists \Omega \subset [0,T_{0}] \times \mathbb{R}^{d}_{x} , 
\text{~bounded, measurable, such that~}  
\|  \Sigma \| _{  L^{1} ([0,T_{0}] \times \mathbb{R}^{d\ }_{x }  \setminus \Omega)}
<  \e_{0}.
\eeq
This is because
$$
 \int_{0}^{T_{0}} \int_{|x|\geq R} \Sigma dxdt  \leq 
 {1\over \langle R \rangle}  \int_{0}^{T_{0}} \int_{|x|\geq R} \langle x \rangle \Sigma dxdt
 \rightarrow 0 \text{~as~} R \rightarrow \infty.
$$

\vspace{3mm}

In conclusion, for any fixed $\Omega$ with finite measure defined in \eqref{sch8}, by Claim 1 and Claim 2, we have
$$\{\Sigma_{|\Omega}\}  \text{ is precompact in } L^1(\Omega),$$
together with \eqref{sch8}, the weighted in space control away from $\Omega$, 
we proved  \eqref{sch9}, 
by using strong compactness criterion in Brezis \cite{Brezis} (Theorem 4.26, Corollary 4.27, p. 111). \\

Now we use the Schauder Theorem  \ref{schauder} to get a fixed point which proves existence of solution  $S\in K\subset X=  L^{1} ([0,T_{0}] \times \mathbb{R}^{d}_{x} )$ for all $T_{0} >0$. And next, we can also construct the global solution $p\in  L^{\infty} ([0,T_{0}], L^{1}\cap L^{\infty}(\mathbb{R}^{d}_{x}\times V \times \mathbb{R}_{m}^{+} ))$ for all $T_{0} >0$ by the above a priori estimates.  The proof of Theorem \ref{global} is complete.
  $\hfill\square$



\section{Regularity and uniqueness of the solution}\label{reguni}

In above section, the existence of global weak solution is derived by using the Schauder fixed point theorem.  
However, the Schauder fixed point theorem does not provide uniqueness. Therefore, additional analysis is required to be developed to obtain uniqueness. 

Note the property $(P4)$ of $G$ in Proposition \ref{BP} and the a priori estimate \eqref{nlinf},   by using Young's inequality we have 
\beq\label{apr6-4}
\|\p_{x_i} S \|_{L^{\infty} (\mathbb{R}^d_x  )}   \leq  
\| \p_{x_i} G\|_{L^{1} (\mathbb{R}^d_x  )}   \| n\|_{L^{\infty} (\mathbb{R}^d_x  )}
< \infty.
\eeq
Also recall \eqref{prop:ps} that $S (\cdot,t) \in  L^{\infty} (\mathbb{R}^d_x)$, then we obtain  
 \begin{proposition}\label{apr18-1}
 Let $S$ be the weak solution of   \eqref{frl1}-\eqref{frl3}   defined in  Theorem \ref{global}. Then
$$
S (\cdot,t) \in  W^{1,\infty} (\mathbb{R}^d_x)     .
$$
\end{proposition}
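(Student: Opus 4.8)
The plan is to use the explicit representation of the chemical potential through the Bessel potential and to read off the $W^{1,\infty}$ bound directly from the a priori estimates already established in Theorem \ref{global}. From \eqref{frl2} we have $S(\cdot,t)=G*n(\cdot,t)$ with $G$ the Bessel potential of Proposition \ref{BP}, so it suffices to control separately $\|S(\cdot,t)\|_{L^\infty}$ and $\|\nabla_x S(\cdot,t)\|_{L^\infty}$. The first is already contained in \eqref{prop:ps}, which gives $S(\cdot,t)\in L^1\cap L^\infty(\mathbb{R}^d_x)$ with an explicit, finite bound for each fixed $t$; no further work is needed there.

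For the gradient, first I would establish that $S$ is weakly differentiable with $\p_{x_i}S=(\p_{x_i}G)*n$. This is the step where property $(P4)$ of Proposition \ref{BP}, namely $\nabla G\in L^1(\mathbb{R}^d)$, is essential: since $n(\cdot,t)\in L^1\cap L^\infty$ by \eqref{nl1} and \eqref{nlinf}, the convolution $(\p_{x_i}G)*n$ is a well-defined $L^\infty$ function, and one may transfer the derivative from $n$ onto $G$ against test functions by a standard mollification/approximation argument. With this identity in hand, Young's inequality for convolution gives
\beq
\|\p_{x_i}S(\cdot,t)\|_{L^\infty(\mathbb{R}^d_x)}\le \|\p_{x_i}G\|_{L^1(\mathbb{R}^d_x)}\,\|n(\cdot,t)\|_{L^\infty(\mathbb{R}^d_x)}<\infty,
\eeq
finite by $(P4)$ together with the uniform bound \eqref{nlinf} on $\|n(\cdot,t)\|_{L^\infty}$; this is precisely \eqref{apr6-4}. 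Summing over $i=1,\dots,d$ then yields $\nabla_x S(\cdot,t)\in L^\infty$.

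Combining the two controls gives $S(\cdot,t)\in L^\infty$ with $\nabla_x S(\cdot,t)\in L^\infty$, i.e.\ $S(\cdot,t)\in W^{1,\infty}(\mathbb{R}^d_x)$, with bounds finite on every fixed time slice since the constants in \eqref{prop:ps} and \eqref{nlinf} are finite for each $t$. The only genuinely non-trivial point is verifying that the distributional gradient of $G*n$ is the $L^\infty$ function $(\nabla G)*n$, rather than merely arguing that $G*n$ is Lipschitz; this rests entirely on $\nabla G\in L^1$, which holds because $G$ decays exponentially at infinity while near the origin $G\sim|x|^{-(d-2)}$ gives $\nabla G\sim|x|^{-(d-1)}$, integrable there since $\int_{|x|<1}|x|^{-(d-1)}\,dx<\infty$. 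I expect this integrability verification, already packaged into $(P4)$, to be the main and essentially only obstacle; everything else is a direct application of Young's inequality to estimates already proved.
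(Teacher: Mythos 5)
Your proof is correct and follows essentially the same route as the paper: writing $S=G*n$, invoking $(P4)$ and the bound \eqref{nlinf} to get $\|\p_{x_i}S\|_{L^\infty}\le\|\p_{x_i}G\|_{L^1}\|n\|_{L^\infty}<\infty$ via Young's inequality, and combining with \eqref{prop:ps}; this is exactly the paper's estimate \eqref{apr6-4}. Your extra care in justifying $\nabla_x(G*n)=(\nabla G)*n$ distributionally is a point the paper leaves implicit, but it does not change the argument.
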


By further assumptions on regularity of coefficients and initial data, we can get higher regularity of $p$. For this purpose, we further assume
\begin{itemize}
\item   There  exists a non-negative, non-decreasing continuous function $\tilde \Pi(\cdot)\in C(\mathbb{R})$ 
such that
\beq\label{apr6-1}
  |\p_{S}F(m,S)| + |\p_{mm}F(m,S)| +  |\p_m\p_{S}F(m,S)| \leq   \tilde\Pi(S).       
\eeq
\item There exist a positive constant $\tilde C_{\mathcal{T}}$ such that 
\beq\label{apr6-2}
 |\nabla_v T(v,v',m)|  +  |\nabla_{v'} T(v,v',m)|  +  |\p_m T(v,v',m)|\leq \tilde C_{\mathcal{T}}. 
\eeq
\end{itemize}

\begin{proposition}\label{apr7-1}

Under the assumptions on Theorem \ref{global}. Further  assume \eqref{apr6-1}-\eqref{apr6-2} and let initial data 
$p_0 \in W^{1,1} 
\cap W^{1,\infty} (\mathbb{R}^d_x \times V \times \mathbb{R}^{+}_m)   $. Then the weak solution of  \eqref{frl1}-\eqref{frl3}  satisfies
\beq\label{apr6-3}
p(\cdot,\cdot,\cdot,t) \in  W^{1,1} 
\cap W^{1,\infty} (\mathbb{R}^d_x \times V \times \mathbb{R}^{+}_m) .
\eeq

\end{proposition}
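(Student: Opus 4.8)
The plan is to establish \eqref{apr6-3} by differentiating the kinetic equation \eqref{frl1} in each of the variables $x_i$, $v_j$ and $m$ and then rerunning the characteristic-plus-Gronwall scheme already used for the bound \eqref{prop:p1} on $p$ itself. Rigorously these estimates should first be derived for difference quotients (or along the approximating sequence furnished by the fixed-point construction in the proof of Theorem \ref{global}) and then passed to the limit; what follows describes the formal computation they reproduce. Writing $\mathcal{L}[p] := \int_V T(v,v',m)\,p(x,v',m,t) - T(v',v,m)\,p(x,v,m,t)\,dv'$ for the turning operator, each first derivative $w\in\{\partial_{x_i}p,\ \partial_{v_j}p,\ \partial_m p\}$ solves a transport equation whose principal part $\partial_t + v\cdot\nabla_x + F\,\partial_m$ is identical to that of $p$. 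Consequently the representation \eqref{eq:p} has an exact analogue for each $w$, the only novelty being a source term built from $p$, from $w$ (and the other first derivatives), and from the derivatives of $F$ and $T$ now controlled by \eqref{apr6-1}--\eqref{apr6-2}.

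The key structural point is that the three derivative families form a \emph{cascade}. Differentiating \eqref{frl1} in $m$ gives, along the characteristics \eqref{mar11-3},
$$
\frac{d}{ds}\,\partial_m p = -(\partial_{mm}F)\,p \;-\; 2(\partial_m F)\,\partial_m p \;+\; \partial_m\mathcal{L}[p],
$$
and since $\partial_m\mathcal{L}[p]$ only couples $\partial_m T$ (bounded by \eqref{apr6-2}) against $p$ and $T$ against $\partial_m p$, this equation involves nothing beyond $p$ --- already controlled in $L^1\cap L^\infty$ by Theorem \ref{global} --- and $\partial_m p$ itself, so it closes. Differentiating in $x_i$ adds the terms $-(\partial_m\partial_S F)(\partial_{x_i}S)\,p - (\partial_S F)(\partial_{x_i}S)\,\partial_m p$ together with $\mathcal{L}[\partial_{x_i}p]$ (as $T$ is $x$-independent); these are controlled using $\nabla_x S\in L^\infty$, which is precisely Proposition \ref{apr18-1}, and the $\partial_m p$ bound just obtained. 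Differentiating in $v_j$ produces the extra term $-\partial_{x_j}p$ arising from $\partial_{v_j}(v\cdot\nabla_x p)$, plus the $\nabla_v T$ and $\nabla_{v'}T$ contributions bounded by \eqref{apr6-2}, so it couples only to the already-estimated $\nabla_x p$. One therefore bounds $\partial_m p$ first, then $\nabla_x p$, then $\nabla_v p$.

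For the $L^\infty$ part one takes suprema in each characteristic representation and applies Gronwall exactly as in \eqref{j14-1}, in this order; the finiteness of $\Pi(S)$, $\tilde\Pi(S)$ (since $S\in L^\infty$ by \eqref{prop:ps}) and of $\|\nabla_x S\|_{L^\infty}$ guarantees bounded coefficients on $[0,T_0]$, and each Gronwall step closes. For the $L^1$ part one uses the conservative form $\partial_t w + v\cdot\nabla_x w + \partial_m(Fw) = (\text{source})$, multiplies by $\sgn w$ and integrates: the transport terms leave only $m$-boundary contributions, which vanish because $p(x,v,0,t)=p(x,v,+\infty,t)=0$ forces the derivatives to vanish there as well, yielding $\frac{d}{dt}\|w\|_{L^1}\le\|\text{source}\|_{L^1}$ and again Gronwall. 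I expect the main obstacle to be not any individual estimate but the bookkeeping that keeps the coupled system from becoming circular: one must check that the $v_j$-equation's dependence on $\nabla_x p$ and the $x_i$-equation's dependence on $\partial_m p$ and on $\nabla_x S$ only ever invoke quantities already controlled --- which is exactly what the ordering ($m$, then $x$, then $v$) and Proposition \ref{apr18-1} secure --- together with the rigorous justification of differentiating the weak solution through difference quotients and the vanishing of the $m$-boundary terms.
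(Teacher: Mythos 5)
Your overall strategy coincides with the paper's: differentiate \eqref{frl1} in $x_i$, $v_j$ and $m$, write each derivative's transport equation along the characteristics \eqref{mar11-3}, bound the coefficients using \eqref{apr6-1}--\eqref{apr6-2} together with the $L^\infty$ bounds on $S$ and $\nabla_x S$ from Proposition \ref{apr18-1}, and close with Gronwall. Your cascade ordering ($m$, then $x$, then $v$) is a legitimate organization; in fact your $x_i$-equation is the more careful one, since it retains the coupling term $-(\partial_S F)(\partial_{x_i}S)\,\partial_m p$, which the paper's displayed equation omits. The paper instead derives the three pointwise inequalities \eqref{apr6-6}--\eqref{apr6-8} and runs a single Gronwall argument on their combination, which tolerates any linear coupling among the three derivatives, so the difference in bookkeeping is immaterial.

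However, your $L^1$ step contains a claim that is false as stated and would break the estimate precisely for $w=\partial_m p$. You assert that the $m$-boundary contributions vanish because $p(x,v,0,t)=p(x,v,+\infty,t)=0$ forces the derivatives to vanish there as well. That is true only for the tangential derivatives $\partial_{x_i}p$ and $\partial_{v_j}p$ (differentiate the identity $p(x,v,0,t)\equiv 0$ in $x_i$ or $v_j$); it is not true for the normal derivative $\partial_m p$. After multiplying the conservative form by $\sgn(w)$ and integrating, the surviving boundary term is $\int\!\!\int F(0,S(x,t))\,|\partial_m p(x,v,0,t)|\,dv\,dx$, a trace that is not controlled by $\|w\|_{L^1}$; moreover, under the hypotheses of this proposition there is no sign information on $F(0,S)$ (the sign assumption \eqref{as:F1} is only introduced later, for the fast adaptation limit), so one cannot even argue this term is dissipative. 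The paper avoids the issue entirely: it never integrates by parts in $m$, but takes $L^q$ norms, $1\leq q\leq\infty$, directly on the characteristic representations \eqref{apr6-6}--\eqref{apr6-8} (the analogues of \eqref{eq:p}), the only price being the Jacobian of the $m$-characteristic flow, which is bounded via $|\partial_m F|\leq\Pi(S)$. Replacing your sign-function argument by this route repairs the $W^{1,1}$ half of \eqref{apr6-3}; the $W^{1,\infty}$ half of your argument is fine as written.
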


\begin{proof}  {\it 1. Regularity on $x$.} 
  Differentiate  \eqref{frl1} with respect to $x_i$, $1\leq i \leq d$, we have
$$
 \p_t \p_{x_i} p   +  v \cdot \nabla_x \p_{x_i} p   +   F(m, S )  \p_m  \p_{x_i} p   =
 - 2  \p_m F(m, S )   \p_{x_i} p  -  \p_m \p_S F(m, S ) \  \p_{x_i} S \ p 
 $$
 $$
+  \int_V T(v,v',m) \p_{x_i} p  (x,v',m,t) - T(v',v,m) \p_{x_i} p  (x,v,m,t) dv'.
$$
Integrate along the characteristics defined by \eqref{mar11-3} from $0$ to $t$:  
$$
 \p_{x_i} p  (x,v,m,t) = \p_{x_i} p_0 (\mathbf{X}(0), v, \mathbf{M}(0)) -  
\int_{0}^{t}  2 \p_{m}   F(\mathbf{M}(\tau),S (\mathbf{X}(\tau),\tau)) 
 \p_{x_i} p  (\mathbf{X}(\tau), v, \mathbf{M}(\tau), \tau)  d\tau
$$
$$
 -  \int_{0}^{t}   \ \p_ {m}    \p_S F(\mathbf{M}(\tau),S (\mathbf{X}(\tau),\tau))  \p_{x_i} S (\mathbf{X}(\tau),\tau)
p  (\mathbf{X}(\tau), v, \mathbf{M}(\tau), \tau)  d\tau
$$
$$
+\int_{0}^{t}\int_{V} T(v,v', \mathbf{M}(\tau)) \p_{x_i} p  (\mathbf{X}(\tau), v', \mathbf{M}(\tau), \tau) 
- T(v',v, \mathbf{M}(\tau))   \p_{x_i} p  (\mathbf{X}(\tau), v, \mathbf{M}(\tau), \tau)  dv' d\tau .
$$
 Using the assumptions on the coefficients and initial data as stated above, we get  
 $$
 \begin{array}{rl}
| \p_{x_i} p  (x,v,m,t)| \leq & \displaystyle | \p_{x_i} p_0(\mathbf{X}(0), v, \mathbf{M}(0))| +  
 2 \Pi(\max\limits_{0\leq s\leq t}| S(\mathbf{X}(s),s )|)  \int_{0}^{t}  
 | \p_{x_i} p  (\mathbf{X}(\tau), v, \mathbf{M}(\tau), \tau) | d\tau
\vspace{2.5mm}  \\ 
& \displaystyle +  \tilde \Pi(\max\limits_{0\leq s\leq t}| S(\mathbf{X}(s),s )|)         \|\p_{x_i} S \|_{L^{\infty} (\mathbb{R}^d_x  )}  \int_{0}^{t}    
|p  (\mathbf{X}(\tau), v, \mathbf{M}(\tau), \tau)  |d\tau
\vspace{2.5mm}  \\ 
&\displaystyle  +    2V_d C_{\mathcal{T}}     \int_{0}^{t}   | \p_{x_i} p  (\mathbf{X}(\tau), v', \mathbf{M}(\tau), \tau) |
  d\tau ,
\end{array}
$$
that is,
\beq\label{apr6-6}
\begin{array}{rcl}
 |\p_{x_i} p  (x,v,m,t)| &\leq&  |\p_{x_i} p_0(\mathbf{X}(0), v, \mathbf{M}(0))| 
 \vspace{2.5mm}  \\ 
 & &
 +\displaystyle   2 \big[    \Pi(\max\limits_{0\leq s\leq t}| S(\mathbf{X}(s),s )|)      + V_d C_{\mathcal{T}} \big]
    \int_{0}^{t}     |\p_{x_i} p   (\mathbf{X}(\tau), v, \mathbf{M}(\tau), \tau)  |d\tau
\vspace{2.5mm}  \\ 
 & &
 +\displaystyle  \tilde  \Pi(\max\limits_{0\leq s\leq t}| S(\mathbf{X}(s),s )|)        \|\p_{x_i} S \|_{L^{\infty} (\mathbb{R}^d_x  )}   
    \int_{0}^{t}    |p  (\mathbf{X}(\tau), v, \mathbf{M}(\tau), \tau)  |d\tau.
    \end{array}
\eeq

{\it 2. Regularity on $v$.}     Differentiate  \eqref{frl1} with respect to $v_i$,  $1\leq i \leq d$, 
we have
$$
 \p_t \p_{v_i} p   +  v \cdot \nabla_x \p_{v_i} p   +   F(m, S )  \p_m  \p_{v_i} p   =
 -  \p_m F(m, S )   \p_{v_i} p    - \p_{x_i}  p
 $$
 $$
 \hspace{2cm}  +  \int_V T(v,v',m) \p_{v_i} p  (x,v',m,t) - T(v',v,m) \p_{v_i} p  (x,v,m,t) dv'
$$
 $$
 \hspace{2cm}  +  \int_V  \p_{v_i} T (v,v',m) p (x,v',m,t) -  \p_{v_i} T (v',v,m) p  (x,v,m,t) dv'.
$$
Similarly,  integrate along the characteristics defined by \eqref{mar11-3} from $0$ to $t$, and use the assumptions, we obtain
 \beq\label{apr6-7}
\begin{array}{rl}
 |\p_{v_i} p  (x,v,m,t)| \leq& \displaystyle  |\p_{v_i} p_0(\mathbf{X}(0), v, \mathbf{M}(0))|  
   +    \int_{0}^{t}     |\p_{x_i} p   (\mathbf{X}(\tau), v, \mathbf{M}(\tau), \tau)  |d\tau
 \vspace{2.5mm}  \\ 
 & \displaystyle + \Pi(\max\limits_{0\leq s\leq t}| S(\mathbf{X}(s),s )|)  
  \int_{0}^{t}     |\p_{v_i} p   (\mathbf{X}(\tau), v, \mathbf{M}(\tau), \tau)  |d\tau
 \vspace{2.5mm}  \\ 
  &
 +\displaystyle    2 V_d  \tilde C_{\mathcal{T}} 
    \int_{0}^{t}    |p  (\mathbf{X}(\tau), v, \mathbf{M}(\tau), \tau)  |d\tau
   +   V_d   C_{\mathcal{T}}   
    \int_{0}^{t}     |\p_{v_i} p   (\mathbf{X}(\tau), v, \mathbf{M}(\tau), \tau)  |d\tau  .
    \end{array}
\eeq 

{\it 3. Regularity on $m$.}  Next,  differentiate  \eqref{frl1} with respect to $m$, 
we have
$$
 \p_t  \p_ {m} p   +  v \cdot \nabla_x  \p_ {m} p   +   F(m, S )  \p_m   \p_ {m} p   =
 -  \p_{mm}  F(m, S )   p   -  2 \p_ {m}  F(m, S )  \p_ {m} p
 $$
 $$
 \hspace{2cm}  +  \int_V T(v,v',m)  \p_ {m} p  (x,v',m,t) - T(v',v,m)  \p_ {m} p  (x,v,m,t) dv'
$$
 $$
 \hspace{2cm}  +  \int_V T_{m}(v,v',m) p (x,v',m,t) - T_{m}(v',v,m) p  (x,v,m,t) dv'.
$$
Integrate along the characteristics defined by \eqref{mar11-3} from $0$ to $t$, we obtain
 \beq\label{apr6-8}
\begin{array}{rcl}
| \p_ {m} p  (x,v,m,t)| &\leq&  |\p_ {m} p_{0}(\mathbf{X}(0), v, \mathbf{M}(0))| 
 \vspace{2.5mm}  \\ 
 & &
 +\displaystyle    \big[   2 \Pi(\max\limits_{0\leq s\leq t}| S(\mathbf{X}(s),s )|)      + 2 V_d C_{\mathcal{T}} \big]
    \int_{0}^{t}    | \p_ {m} p   (\mathbf{X}(\tau), v, \mathbf{M}(\tau), \tau)  |d\tau
\vspace{2.5mm}  \\ 
 & &
 +\displaystyle  \big[   \tilde  \Pi(\max\limits_{0\leq s\leq t}| S(\mathbf{X}(s),s )|)  + 2   V_d \tilde C_{\mathcal{T}} \big]     
    \int_{0}^{t}    |p  (\mathbf{X}(\tau), v, \mathbf{M}(\tau), \tau)  |d\tau.
    \end{array}
\eeq

In summary, recall the estimate  \eqref{prop:p1} for $p$ and Proposition \ref{apr18-1} for $S$,  combining  \eqref{apr6-6}-\eqref{apr6-8} and taking $L^q$ norms,      using  Gronwall's inequality, we deduce, for $1\leq q \leq \infty$, $\forall \ t>0$,
$$
\|\nabla _x p (\cdot,\cdot,\cdot,t) \|_{L^q} +  \|\nabla _v  p (\cdot,\cdot,\cdot,t) \|_{L^q} + \| \p_ {m} p (\cdot,\cdot,\cdot,t) \|_{L^q}
 \leq C( \|S(\cdot, t) \|_{W^{1,\infty}},    \|p_0 \|_{W^{1,q}}, V_d, C_{\mathcal{T}}, \tilde C_{\mathcal{T}})
 < \infty. 
$$
Together with  \eqref{prop:p1}, we have \eqref{apr6-3}.  $\hfill\square$\\
\end{proof}

For later use, we derive the following a priori estimates. 
 
 \begin{proposition}\label{apr18-2}

Under the assumptions on Proposition \ref{apr7-1}, and further assume 
 the initial data satisfy 
$\p_m p_0 \in L^\infty_x ( L^1_{v,m} ) $. Then for all $t>0$, 
 $$
 \|p\|_{L^\infty_x ( L^1_{v,m} )} +  \|\p_m p \|_{L^\infty_x ( L^1_{v,m} )}  < \infty. 
 $$

\end{proposition}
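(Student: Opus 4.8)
The statement splits into two pieces, and the first is essentially free. Since $p\geq 0$ and $p(x,v,0,t)=p(x,v,+\infty,t)=0$, the $L^1_{v,m}$ density is simply $\int_V\int_0^\infty |p|\,dm\,dv=\int_V\bar p(x,v,t)\,dv=n(x,t)$, so that $\|p\|_{L^\infty_x(L^1_{v,m})}=\|n(\cdot,t)\|_{L^\infty(\mathbb{R}^d_x)}$, which is already bounded for every $t$ by \eqref{nlinf}. Hence the whole content of the Proposition lies in the bound on $\p_m p$, and my plan is to propagate the mixed norm $B(t):=\|\p_m p(\cdot,\cdot,\cdot,t)\|_{L^\infty_x(L^1_{v,m})}$ by a Gronwall argument built on the characteristics \eqref{mar11-3}.

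First I would reuse the computation behind \eqref{apr6-8}: setting $w=\p_m p$, $w_0=\p_m p_0$, and integrating the equation for $w$ along the characteristics, but this time \emph{keeping the turning gain term as a $v'$-integral} rather than majorizing it by a supremum. This gives, along the backward characteristic issued from $(x,v,m,t)$,
$$
|w(x,v,m,t)|\le |w_0(\mathbf{X}(0),v,\mathbf{M}(0))|+\int_0^t\Big[2\Pi|w|+\tilde\Pi|p|+|\mathcal{T}[w]|+|\mathcal{T}_m[p]|\Big](\mathbf{X}(\tau),v,\mathbf{M}(\tau),\tau)\,d\tau,
$$
where $\mathcal{T}[w]=\int_V T(v,v',m)w(v')-T(v',v,m)w(v)\,dv'$ and $\mathcal{T}_m[p]$ is the analogous expression with $\p_m T$, controlled by \eqref{apr6-1}--\eqref{apr6-2}. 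The diagonal contributions (the $\p_mF$ term and the loss part of $\mathcal{T}[w]$) are multiplicative in $w$ along the trajectory, so I would absorb them into an integrating factor bounded by $\exp\big((2\Pi+V_dC_{\mathcal{T}})t\big)$ via \eqref{frl8} and \eqref{as:T}; this removes them as Gronwall sources and leaves only genuinely lower-order remainders.

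Next, take the $L^1_{v,m}$ norm at fixed $x$ and then $\sup_x$. The one structural device needed is the change of variable $m\mapsto \mathbf{M}(\tau)$ on the $m$-fibre: since $\tfrac{d}{d\tau}\p_m\mathbf{M}=\p_mF\,\p_m\mathbf{M}$, the Jacobian equals $\exp(\int_\tau^t\p_mF)$ and is controlled by $e^{\Pi t}$ through \eqref{frl8}, so the $L^1_m$ mass transports with a bounded factor. The $p$-source terms are then harmless: by $p\ge 0$ the $m$-integral collapses to $\bar p$, and $\bar p\in L^\infty(\mathbb{R}^d_x\times V)$ by \eqref{prop:p2}, so integrating over the compact $V$ gives a bound uniform in $x$. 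For the turning gain term the decisive point is that, \emph{after integrating out the post-collision velocity $v'$}, the integrand becomes the $v$-independent density $\int_V\int_0^\infty|w(\cdot,v',m',\tau)|\,dm'\,dv'$; the spatial shift $x-v(t-\tau)$ can then be estimated trivially, contributing $\le V_dC_{\mathcal{T}}\int_0^t B(\tau)\,d\tau$.

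The delicate point, which I expect to be the main obstacle, is precisely the \emph{interaction of $\sup_x$ with the free transport} in the diagonal and initial terms, where the position is shifted by $x-v(t-\tau)$ with the very velocity $v$ that is being integrated: here the naive interchange $\sup_x\int_V\le\int_V\sup_x$ would cost a strictly stronger mixed norm (velocity transport can focus a density that is only controlled in $L^\infty_x(L^1_v)$), so the argument must lean on the integrating-factor reduction above (which eliminates the diagonal source entirely) together with the hypothesis $\p_m p_0\in L^\infty_x(L^1_{v,m})$ to control the transported initial datum. Once this is arranged, one is left with an inequality of the form $B(t)\le C(t)\big(B(0)+1+\int_0^t B(\tau)\,d\tau\big)$, with $C(t)$ depending only on $\sup_{s\le t}\|S(\cdot,s)\|_{W^{1,\infty}}$ (finite by Proposition \ref{apr18-1}), $\|\bar p_0\|_{L^\infty}$, $V_d$, $C_{\mathcal{T}}$, $\tilde C_{\mathcal{T}}$ and the moduli $\Pi,\tilde\Pi$; Gronwall's inequality then closes the estimate and yields $B(t)<\infty$ for every $t>0$.
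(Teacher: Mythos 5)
Your overall route --- integrate the equation for $w=\p_m p$ along the characteristics \eqref{mar11-3}, take the mixed norm $B(t)=\|\p_m p(t)\|_{L^\infty_x(L^1_{v,m})}$, and close with Gronwall --- is exactly the paper's route: its proof consists of taking the $L^\infty_x(L^1_{v,m})$ norm of \eqref{apr6-8} and invoking Gronwall, and its first half (bounding $\|p\|_{L^\infty_x(L^1_{v,m})}$ by $V_d\|\bar p\|_{L^\infty}$ via \eqref{prop:p2}, equivalently by $\|n\|_{L^\infty}$ via \eqref{nlinf}) is the same as yours. Your execution is in fact more careful than the paper's in three respects, all genuine improvements: you keep the turning gain as a $v'$-integral, so that after integrating out $v'$ the spatial shift $x-v(t-\tau)$ no longer involves the integrated velocity and the gain contributes at most $V_dC_{\mathcal{T}}e^{\Pi t}\int_0^t B(\tau)\,d\tau$; you track the Jacobian of $m\mapsto\mathbf{M}(\tau)$, controlled by \eqref{frl8}; and you remove the diagonal terms ($\p_m F$ and the loss part) with an integrating factor bounded via \eqref{frl8} and \eqref{as:T}, so they never appear as Gronwall sources.

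There is, however, one genuine gap, located precisely at the point you flag as delicate and then wave away: the transported initial datum. The hypothesis $\p_m p_0\in L^\infty_x(L^1_{v,m})$ does \emph{not} control $\sup_x\int_V\int_0^\infty|\p_m p_0(x-vt,v,\mu)|\,d\mu\,dv$, and your own focusing observation shows why: if $\p_m p_0$ is concentrated near the set $\{y=-vt\}$, then after transport every $v\in V$ contributes at $x=0$ simultaneously, whereas at any fixed $y$ only a set of velocities of measure $O((r/t)^d)$ contributes, so the transported quantity can exceed $\|\p_m p_0\|_{L^\infty_x(L^1_{v,m})}$ by an arbitrarily large factor. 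Thus the very first term of your Gronwall inequality is not known to be finite, and ``leaning on the hypothesis'' is not an argument. The natural repair is to work with the stronger norm $N(t):=\sup_{x,v}\int_0^\infty|\p_m p(x,v,m,t)|\,dm$ (i.e.\ $L^\infty_{x,v}(L^1_m)$), assuming correspondingly $\p_m p_0\in L^\infty_{x,v}(L^1_m)$: for this norm the shift $x-v(t-\tau)$ is harmless in every term (no integration in the shifted velocity occurs before the supremum), the gain is bounded by $V_dC_{\mathcal{T}}e^{\Pi t}N(\tau)$, the $p$-sources collapse to $\|\bar p\|_{L^\infty}$ by \eqref{prop:p2}, and Gronwall closes; compactness of $V$ then yields the stated $L^\infty_x(L^1_{v,m})$ bound --- exactly the mechanism used for $p$ itself in the first half, where the bound comes from $\bar p\in L^\infty_{x,v}$ and not from the weaker mixed norm. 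In fairness, the paper's one-line proof suffers from the same defect (it takes the mixed norm of \eqref{apr6-8} without addressing the $v$-dependent spatial shift at all), so the obstruction you identified is real and inherited rather than self-inflicted; but your proposal does not overcome it either.
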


\begin{proof} Firstly, recall the a priori estimate \eqref{prop:p2} on $\bar p$ thus $p\in L^\infty_{x,v} ( L^1_m) $.
Note $V$ is compact, then we readily have
$$
 \|p\|_{L^\infty_x ( L^1_{v,m}) }   < \infty. 
 $$
Next we prove $p_m  \in L^\infty_x ( L^1_{v,m}  )$. For this, we  take $L^\infty_x  (L^1_{v,m} ) $ norm on both sides of   \eqref{apr6-8}, 
then the result follows by using again  Gronwall's inequality.  
  $\hfill\square$\\
\end{proof}

 We now state the uniqueness result.  \begin{proposition}\label{apr7-2}

Under the assumptions on Proposition \ref{apr18-2}, the weak solution of   \eqref{frl1}-\eqref{frl3} is unique.

\end{proposition}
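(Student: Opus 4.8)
The plan is to prove uniqueness by a standard energy/Gronwall argument on the difference of two solutions. Suppose $(p_1, S_1)$ and $(p_2, S_2)$ are two weak solutions of \eqref{frl1}-\eqref{frl3} with the same initial data $p_0$, both satisfying the regularity established in Propositions \ref{apr7-1} and \ref{apr18-2}. Set $q := p_1 - p_2$ and $\sigma := S_1 - S_2$. Subtracting the two equations, $q$ satisfies
$$
\p_t q + v\cdot\nabla_x q + \p_m\big[F(m,S_1)\,q\big] = -\p_m\big[(F(m,S_1)-F(m,S_2))\,p_2\big] + \int_V T(v,v',m)q(x,v',m,t) - T(v',v,m)q(x,v,m,t)\,dv',
$$
with $q|_{t=0}=0$, and $\sigma = G*(n_1-n_2)$ where $n_i = \int_0^\infty\int_V p_i\,dv\,dm$. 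The goal is to derive a closed differential inequality for the quantity $A(t) := \|q(\cdot,\cdot,\cdot,t)\|_{L^1(\mathbb{R}^d_x\times V\times\mathbb{R}^+_m)}$ and conclude $A\equiv 0$ via Gronwall.

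The key steps, in order, would be as follows. First, I would multiply the equation for $q$ by $\sgn(q)$ and integrate in $(x,v,m)$; the transport term $v\cdot\nabla_x q$ and the drift term $\p_m[F(m,S_1)q]$ integrate to zero (using the boundary conditions \eqref{frl3} and Kato-type inequalities, since $\int \sgn(q)\,\p_m[Fq] = \int \p_m[F|q|] = 0$), and the turning term contributes at most $2V_d C_{\mathcal{T}} A(t)$ by \eqref{as:T}. Second, I must control the genuinely nonlinear source term $-\p_m[(F(m,S_1)-F(m,S_2))p_2]$, which after multiplying by $\sgn(q)$ and integrating contributes $\int |\p_m[(F(m,S_1)-F(m,S_2))p_2]|$. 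Expanding by the product rule gives two pieces: one with $|\p_m F(m,S_1)-\p_m F(m,S_2)|\,|p_2|$ and one with $|F(m,S_1)-F(m,S_2)|\,|\p_m p_2|$. Using the mean value theorem together with the bounds \eqref{apr6-1} on $|\p_S F|$ and $|\p_m\p_S F|$, both pieces are bounded by $\tilde\Pi(\cdot)\,|\sigma(x,t)|$ times either $|p_2|$ or $|\p_m p_2|$. Integrating in $(v,m)$ and using Proposition \ref{apr18-2} (which gives $\|p_2\|_{L^\infty_x(L^1_{v,m})}$ and $\|\p_m p_2\|_{L^\infty_x(L^1_{v,m})}$ finite), this term is bounded by $C\,\|\sigma(\cdot,t)\|_{L^1(\mathbb{R}^d_x)}$.

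Third, I would close the loop by estimating $\|\sigma(\cdot,t)\|_{L^1}$ in terms of $A(t)$: since $\sigma = G*(n_1-n_2)$ and $\|G\|_{L^1}=1$ by (P1) of Proposition \ref{BP}, Young's inequality gives $\|\sigma(\cdot,t)\|_{L^1} \le \|n_1-n_2\|_{L^1} \le A(t)$, because $\|n_1-n_2\|_{L^1(\mathbb{R}^d_x)} = \|\int_V\int_0^\infty q\,dm\,dv\|_{L^1} \le A(t)$. Combining the three steps yields the differential inequality $\frac{d}{dt}A(t) \le C\,A(t)$ for a constant $C$ depending on $V_d$, $C_{\mathcal{T}}$, the $\tilde\Pi$-bounds evaluated at the (uniformly bounded) values of $S_i$ from \eqref{prop:ps}, and the norms in Proposition \ref{apr18-2}. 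Since $A(0)=0$, Gronwall forces $A(t)\equiv 0$, hence $p_1=p_2$ and consequently $S_1=S_2$.

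The main obstacle I anticipate is the rigorous justification of the formal manipulations — specifically, that multiplying by $\sgn(q)$ and integrating the drift term $\p_m[F(m,S_1)q]$ and the transport term really produce no contribution, and that the product-rule expansion of the source term is legitimate at the level of weak solutions. This is exactly where the regularity secured in Propositions \ref{apr7-1} and \ref{apr18-2} is essential: having $p_2 \in W^{1,1}\cap W^{1,\infty}$ and $\p_m p_2 \in L^\infty_x(L^1_{v,m})$ makes the source term well-defined in $L^1$ and allows the renormalization/Kato-inequality arguments to be carried out without appealing to DiPerna-Lions theory (noting, as the authors remarked after \eqref{eq:p}, that the characteristics, though not Lipschitz, are uniquely defined). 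A careful treatment should mollify or use the renormalization property and then pass to the limit, but the analytic heart of the estimate is the Gronwall inequality above.
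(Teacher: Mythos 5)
Your proposal is correct, and in substance it follows the same strategy as the paper: the same decomposition of the difference equation (isolating $\p_m[F(m,S_1)q]$ from the genuinely nonlinear source $\p_m[(F(m,S_1)-F(m,S_2))p_2]$), the same mean-value expansion of $F$ in $S$ controlled by \eqref{apr6-1}, the same use of Proposition \ref{apr18-2} to bound $\|p_2\|_{L^\infty_x(L^1_{v,m})}$ and $\|\p_m p_2\|_{L^\infty_x(L^1_{v,m})}$, the same Young-inequality closure $\|\sigma(\cdot,t)\|_{L^1_x}\le \|q(t)\|_{L^1_{x,v,m}}$, and the same Gronwall conclusion. The only genuine difference is the vehicle for deriving the $L^1$ inequality: you work in Eulerian form, multiplying by $\sgn(q)$ and invoking a Kato/renormalization identity, whereas the paper works in Lagrangian form, integrating the difference equation along the characteristics \eqref{mar11-3} associated with $S_1$ and then taking the $L^1_{x,v,m}$ norm of the resulting representation formula. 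The paper's route buys a cheaper justification: since $S_1\in W^{1,\infty}$ (Proposition \ref{apr18-1}) and $\p_m F$ is bounded by \eqref{frl8}, the characteristics are well defined and no chain rule for weak solutions is needed; your route is equally valid but must rigorously establish the renormalization step $\int \sgn(q)\,\p_m[Fq]=\int \p_m[F|q|]=0$, which you correctly identify as the main obstacle and which the regularity of Propositions \ref{apr7-1}--\ref{apr18-2} indeed suffices to handle (e.g.\ by mollification in $m$).
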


\begin{proof}
We follow the steps in \cite{LP}. 
Assume both $(p_1,S_1)$ and $(p_2,S_2)$ be  weak solutions of  \eqref{frl1}-\eqref{frl3} with same initial data $p_0$ satisfying  \eqref{as:in1}-\eqref{as:in2}. Denote 
$$\tilde p = p_1 - p_2, \quad \tilde   S= S_1 - S_2. $$
Then we have
\beq \label{apr7-3}
\begin{array}{c}
\displaystyle  \p_t \tilde  p  +  v \cdot \nabla_x \tilde  p  + \p_m [  F(m, S_1 )  \tilde   p  + ( F(m, S_1 ) -  F(m, S_2 ))  p_2 ] 
 \vspace{2.5mm}  \\ 
\hspace{4cm} \displaystyle  = \int_V T(v,v',m) \tilde  p (x,v',m,t) - T(v',v,m) \tilde  p (x,v,m,t) dv',
 \end{array}
\eeq
\beq \label{apr7-4}
-\Delta \tilde  S  + \tilde  S  = \tilde  n (x,t):= \int_{0}^{\infty}\int_{V} \tilde  p (x,v,m,t) dv dm,
\eeq
and the initial data 
\beq \label{apr7-5}
\tilde p = 0.
\eeq
From \eqref{apr7-4} we write $  \tilde S  = G * \tilde  n $. By Young's inequality,    
\beq \label{apr18-3}
\| \tilde S \|_{L^1_x} \leq          \| \tilde  p(t) \|_{L^1_{x,v,m}} .
\eeq
Next, from \eqref{apr7-3} we have
$$
\begin{array}{rl}
 &\displaystyle  \p_t \tilde  p  +  v \cdot \nabla_x \tilde  p  +  F(m, S_1 )   \p_m \tilde   p    
 \vspace{2.5mm}  \\ 
   =& \displaystyle - \p_m   F(m, S_1 )  \tilde   p   - \p_m \Big(   p_2   \int_0^1   \p_S F(m, (1-\theta)S_2  + \theta S_1) d \theta  \Big)   \tilde S  
  \vspace{2.5mm}  \\ 
  & \displaystyle  + \int_V T(v,v',m) \tilde  p (x,v',m,t) - T(v',v,m) \tilde  p (x,v,m,t) dv'.
 \end{array}
$$
Integrate along the characteristics defined by \eqref{mar11-3} from $0$ to $t$,  with $S$ replaced by $S_1$, we get
$$
\tilde p  (x,v,m,t) = \tilde  p_{0}(\mathbf{X}(0), v, \mathbf{M}(0))    
 -  \int_{0}^{t}    \p_{m}   F(\mathbf{M}(\tau),S_1 (\mathbf{X}(\tau),\tau)) 
\tilde p (\mathbf{X}(\tau), v, \mathbf{M}(\tau), \tau)  d\tau
$$
$$
 -  \int_{0}^{t}  \p_m \Big(   p_2   \int_0^1   \p_S F(m, (1-\theta)S_2  + \theta S_1) d \theta  \Big)  
   (\mathbf{X}(\tau), v, \mathbf{M}(\tau), \tau)           \tilde S(\mathbf{X}(\tau),\tau)  d\tau
$$
$$
+\int_{0}^{t}\int_{V} T(v,v', \mathbf{M}(\tau)) \tilde p  (\mathbf{X}(\tau), v', \mathbf{M}(\tau), \tau) 
- T(v',v, \mathbf{M}(\tau))  \tilde  p  (\mathbf{X}(\tau), v, \mathbf{M}(\tau), \tau)  dv' d\tau .
$$
 Take $L^1_{x,v,m}$ norm  on both sides, we obtain
 $$
 \begin{array}{rl}
\| \tilde  p(t) \|_{L^1_{x,v,m}} \leq   &  \| \tilde  p_0 \|_{L^1_{x,v,m}}  
+ \displaystyle  \big[   \Pi(\max\limits_{0\leq s\leq t}| S(\mathbf{X}(s),s )|)  +  2V_d C_{\mathcal{T}} \big]  
\int_0^t   \| \tilde  p(s) \|_{L^1_{x,v,m}} ds 
\vspace{2.5mm}  \\ 
& +  \Big( \|p\|_{L^\infty_x  (L^1_{v,m}) } +  \|p_m\|_{L^\infty_x  (L^1_{v,m}) }\Big)  \Pi(\max\limits_{0\leq s\leq t}| S(\mathbf{X}(s),s )|) \ 
\| \tilde S \|_{L^1_x}.  
\end{array}
 $$ 
Notice from Proposition \ref{apr18-2} that   $p, p_m  \in L^\infty_x ( L^1_{v,m} ) $, and use \eqref{apr18-3}, we get
$$
\| \tilde  p(t) \|_{L^1_{x,v,m}} \leq      \| \tilde  p_0 \|_{L^1_{x,v,m}}   + C(t) \int_0^t   \| \tilde  p(s) \|_{L^1_{x,v,m}} ds ,
$$
where $C(t)$ is bounded for any given $t$. 
Recall the zero initial data \eqref{apr7-5}, thus by applying Gronwall's inequality we have
$$
\| \tilde  p(t) \|_{L^1_{x,v,m}}  \equiv 0,
$$
 then we have proved the uniqueness of the global weak solution.
 $\hfill\square$
\end{proof}


\section{Fast adaptation limit}\label{section3}

 In this section we investigate, as $\e\rightarrow0$, the limiting behavior of the system
 \beq \label{newepsilon}
 \left\{
 \begin{array}{rcl}
  \p_t p^{\varepsilon} +  v \cdot \nabla_x p^{\varepsilon} + \p_m [  \f{F(m, S^{\varepsilon})}{\e}  p^{\varepsilon}]& = &{\dis \int}_V T(v,v',m) p^{\varepsilon}(x,v',m,t) - T(v',v,m) p^{\varepsilon}(x,v,m,t) dv'
\vspace{2mm}  \\
 -\Delta S^{\varepsilon} +S^{\varepsilon} &=& n^{\varepsilon}(x,t):= {\dis \int_{0}^{\infty}  \int_{V}} p^{\varepsilon}(x,v,m,t) dv dm, \vspace{2mm} \\
 p^\e (x,v, m=0,t)=0, & & p^\e (x,v, m=+\infty,t)=0,
\end{array}
\right.
\eeq
with initial data $p_0$ which satisfies \eqref{as:in1}-\eqref{as:in2}.
Below we denote  $\mathcal{M}(\Omega)$ the space of Radon measures on $\Omega$, $C_0$ is the Banach space of continuous functions which vanishe at $\infty$,  and  the notation
$$
\bar{p}^\e (x,v,t) := \int_{0}^{\infty} p^\e  (x,v,m,t)  dm .
$$

First of all, we recall the a priori estimates including the parameter $\e$, which is 

\begin{proposition}

Under the same assumptions  in Theorem \ref{global}.  The  solution to \eqref{newepsilon} satisfies
   \beq\label{newprop:p1}
\|  p^\e   (\cdot,\cdot,\cdot,t) \|_{ L^{\infty} (\mathbb{R}^d_x \times V \times \mathbb{R}^{+}_m)} \leq  
\|p_{0}\|_{  L^{\infty}}  \big( 1+ {\tilde{C}t \over \e}   e^{\tilde{C} t\over \e}  \big), \quad \forall t>0,
\eeq
and
  \beq\label{mar12}
\|  p^\e   (\cdot,\cdot,\cdot,t) \|_{ L^{1} (\mathbb{R}^d_x \times V \times \mathbb{R}^{+}_m)} = \|p_{0}\|_{L^{1}} , \quad \forall t>0,
\eeq
\beq\label{newprop:ps}
\|  S ^\e  (\cdot,t) \|_{L^{1}\cap L^{\infty} (\mathbb{R}^d_x)} \leq  \|p_{0}\|_{L^{1}} +
  V_d  \|\bar p_{0}\|_{L^{\infty}} (1+2V_dC_{\mathcal{T}}t e^{2V_dC_{\mathcal{T}}t}), \quad \forall t>0,
\eeq
\beq\label{newnlinf}
\| n^\e   (\cdot,t) \|_{L^{\infty} (\mathbb{R}^d_x  )} \leq V_d \|\bar p_{0}\|_{L^{\infty}}  (1+2V_dC_{\mathcal{T}}t e^{2V_dC_{\mathcal{T}}t}),\quad \forall t>0,
\eeq
\beq\label{newprop:p2}
\|  \bar{p}^\e   (\cdot,\cdot,t) \|_{L^{\infty} (\mathbb{R}^d_x \times V  )} \leq  \|\bar p_{0}\|_{L^{\infty}}  (1+2V_dC_{\mathcal{T}}t e^{2V_dC_{\mathcal{T}}t}),\quad \forall t>0.
\eeq

\end{proposition}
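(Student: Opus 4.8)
The plan is to re-run the a priori estimates from the proof of Theorem \ref{global} while tracking the adaptation parameter $\e$. The structural point to exploit is that $1/\e$ multiplies \emph{only} the $\p_m$-flux $\p_m[F(m,S^\e)p^\e]$; consequently it is invisible to any estimate obtained either by integrating the equation in $m$ over $[0,\infty)$ or by testing against a weight independent of $m$, because the boundary conditions \eqref{frl3} force the flux $[F(m,S^\e)p^\e/\e]_{m=0}^{m=+\infty}$ to vanish. This is why \eqref{mar12}--\eqref{newprop:p2} come out exactly as in Theorem \ref{global}, uniformly in $\e$, and only the pointwise estimate \eqref{newprop:p1} feels the singular scaling.

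First I would integrate \eqref{newepsilon} over $(x,v,m)$: the transport term and the turning operator integrate to zero, and the adaptation term contributes only the vanishing boundary flux, so $\frac{d}{dt}\|p^\e(t)\|_{L^1}\equiv 0$ and \eqref{mar12} follows. Next, integrating \eqref{newepsilon} in $m$ alone produces precisely the $\e$-free equation satisfied by $\bar p$ in Theorem \ref{global}; writing it in mild form along the free flow $x\mapsto x-vt$ and using \eqref{as:T}, Gronwall's inequality yields the $\e$-independent bound \eqref{newprop:p2}, and integration in $v$ gives \eqref{newnlinf}. Since $S^\e=G*n^\e$, Young's inequality together with (P1) of Proposition \ref{BP} then reproduces \eqref{prop:ps} as the $\e$-uniform estimate \eqref{newprop:ps}.

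The only genuinely $\e$-dependent bound is \eqref{newprop:p1}. Here I would use the characteristics \eqref{mar11-3} with the internal-state equation replaced by $d\mathbf{M}/ds=F(\mathbf{M}(s),S^\e(\mathbf{X}(s),s))/\e$ (still well defined and uniquely solvable, since \eqref{frl8} makes $F/\e$ Lipschitz in $m$), and expand $\p_m[F p^\e/\e]=(F/\e)\p_m p^\e+(\p_m F/\e)p^\e$. The first piece is absorbed into the derivative along the characteristic, leaving the source $-(\p_m F/\e)p^\e$. Reproducing the representation \eqref{eq:p} and taking the $L^\infty$ norm gives, by \eqref{frl8}--\eqref{as:T},
$$\|p^\e(t)\|_{L^\infty}\leq \|p_0\|_{L^\infty}+\Big(\tfrac{1}{\e}\Pi(\max\limits_{0\leq s\leq t}|S^\e(\mathbf{X}(s),s)|)+2V_dC_{\mathcal{T}}\Big)\int_0^t\|p^\e(\tau)\|_{L^\infty}\,d\tau,$$
after which Gronwall's inequality yields \eqref{newprop:p1}.

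The point I expect to require the most care --- though it is more bookkeeping than real difficulty --- is checking that the constant $\tilde C$ in \eqref{newprop:p1} can indeed be taken independent of $\e$. This uses the estimate \eqref{newprop:ps} already established: since $S^\e$ is bounded in $L^\infty$ uniformly in $\e$ and $\Pi$ is continuous and non-decreasing, $\Pi(\max_{s}|S^\e|)$ is controlled by a fixed constant. Then for $\e\leq 1$ one has $\tfrac{1}{\e}\Pi+2V_dC_{\mathcal{T}}\leq \tilde C/\e$, which furnishes the exponent $\tilde C t/\e$. Conceptually, this makes explicit the dichotomy driving Section \ref{section3}: the microscopic density $p^\e$ may concentrate, its $L^\infty$ norm blowing up like $e^{\tilde C t/\e}$, while all macroscopic moments $\bar p^\e$, $n^\e$, $S^\e$ remain uniformly bounded.
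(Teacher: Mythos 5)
Your proposal is correct and follows essentially the same route as the paper: the paper simply re-runs the a priori estimates of Section \ref{section2} (mass conservation, $m$-integration killing the $\p_m$-flux by the boundary conditions \eqref{frl3}, Young's inequality for $S^\e=G*n^\e$, and the characteristics representation \eqref{eq:p} with $F$ replaced by $F/\e$), which is exactly your argument, including the key observation that only the $\p_m F/\e$ term in the characteristics estimate is $\e$-singular while all $m$-integrated quantities are $\e$-uniform. Your remark that the $\e$-uniform bound \eqref{newprop:ps} on $S^\e$ must be established first so that $\Pi(\max_s|S^\e|)$ is controlled independently of $\e$ is precisely how the constant $\tilde C$ is made $\e$-independent in the paper.
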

From the above proposition, we have
\begin{lemma}\label{l3.1} 
Let $(p^\e, S^\e)$ be solution of \eqref{newepsilon} and   further assume \eqref{as:F1}-\eqref{mar11-4}. Fix any $T>0$. 
Then, up to a subsequence, we have
\beq\label{j20-1}
p^{\e}    \rightharpoonup p  \quad\text{in}\quad  \mathcal{M}( \mathbb{R}^d_x \times V \times \mathbb{R}^{+}_m \times  [0,T]) -w*,
\eeq
\beq\label{j20-2}
\bar p^{\e}     \rightharpoonup \bar p    \quad\text{in}\quad L^\infty ( \mathbb{R}^d_x \times V \times  [0,T] ) -w* ,
\eeq
\beq\label{j20-5}
 \bar p(x,v, t)=  \int_{0}^{\infty} p  (x,v,m,t)  dm ,
\eeq
\beq\label{j20-3}
S^\e (x, t)   \rightarrow   S(x, t) \quad\text{strong in}\quad  L^q( \mathbb{R}^d_x \times  [0,T] ) ,\quad \forall 1\leq q<\infty  ,
\eeq
and further 
\beq\label{f3-1}
S^\e (x, t)   \rightarrow   S(x, t) \quad\text{ in}\quad  C_0( \mathbb{R}^d_x \times  [0,T] )    .
\eeq

\end{lemma}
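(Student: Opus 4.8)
The plan is to extract limits by weak-$*$ compactness, then to identify the $m$-marginal through a tightness estimate in the internal variable, and finally to upgrade the convergence of $S^\e$ to the strong topologies using the smoothing of the Bessel potential. I would first obtain \eqref{j20-1} and \eqref{j20-2} directly from the uniform bounds. By the mass conservation \eqref{mar12}, the family $\{p^\e\}$ is bounded in $L^1 \subset \mathcal{M}(\mathbb{R}^d_x\times V\times\mathbb{R}^+_m\times[0,T]) = (C_0)^*$; since $C_0$ is separable, the sequential Banach--Alaoglu theorem yields a subsequence converging weak-$*$ to some Radon measure $p$, which is \eqref{j20-1}. Likewise \eqref{newprop:p2} bounds $\{\bar p^\e\}$ in $L^\infty = (L^1)^*$, and Banach--Alaoglu gives \eqref{j20-2}. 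Observe that the pointwise bound \eqref{newprop:p1} on $p^\e$ itself degenerates like $e^{\tilde C t/\e}$, so one genuinely cannot expect better than a measure-valued limit for $p^\e$.

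The key step, and the main obstacle, is \eqref{j20-5}: the $m$-marginal of the measure $p$ must coincide with the $L^\infty$ function $\bar p$, which amounts to ruling out escape of mass to $m=+\infty$ uniformly in $\e$. Here the sign structure \eqref{as:F1} and the moment bound \eqref{mar11-4} are essential. Since $S^\e$ is bounded in $L^\infty$ uniformly in $\e$ by \eqref{newprop:ps}, the threshold $m_0(S^\e)$ stays below some fixed constant $M_0$, so $F(m,S^\e)<0$ for $m>M_0$. Testing the kinetic equation in \eqref{newepsilon} against $(m-M_0)^+$ (the transport term integrates to zero in $x$, the turning term integrates to zero in $v$, and the $m$-flux term integrates by parts using the boundary conditions in \eqref{newepsilon}), I would obtain
$$
\frac{d}{dt}\int (m-M_0)^+ p^\e\,dx\,dv\,dm = \frac{1}{\e}\int_{m>M_0} F(m,S^\e)\,p^\e\,dx\,dv\,dm \le 0,
$$
so that $\int (m-M_0)^+ p^\e(t) \le \int m\,p_0 < \infty$ uniformly in $\e$ and $t\le T$. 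This yields the tightness $\int_{m>M} p^\e \le C/(M-M_0)$, uniformly in $\e$ and $t$. To conclude \eqref{j20-5}, for a test function $\psi(x,v,t)$ I would split $\psi = \psi\chi_M + \psi(1-\chi_M)$ with $\chi_M$ a cutoff equal to $1$ on $\{m\le M\}$ and compactly supported in $m$; the term $\psi\chi_M\in C_0$ passes to the limit against $p^\e$ by \eqref{j20-1}, while the remainder is controlled by the tightness, and letting $M\to\infty$ identifies the $m$-marginal of $p$ with the weak-$*$ limit $\bar p$ of \eqref{j20-2}.

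Finally, for \eqref{j20-3} and \eqref{f3-1} I would exploit $S^\e = G*n^\e$. By \eqref{newnlinf} and \eqref{mar12}, $n^\e$ is bounded in $L^1\cap L^\infty$ uniformly in $\e$, so property (P4) of Proposition \ref{BP} gives $\nabla S^\e = (\nabla G)*n^\e$ bounded in $L^\infty$, i.e. uniform spatial Lipschitz control (as in Proposition \ref{apr18-1}). For time regularity, integrating \eqref{newepsilon} in $(v,m)$ yields $\partial_t n^\e = -\nabla_x\cdot\int_V v\,\bar p^\e\,dv$, whence $\partial_t S^\e = -(\nabla_x G)*\int_V v\,\bar p^\e\,dv$ is likewise bounded in $L^\infty$ uniformly in $\e$; thus $\{S^\e\}$ is uniformly Lipschitz, hence equicontinuous, on $\mathbb{R}^d_x\times[0,T]$. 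The weighted bound $\frac{d}{dt}\int\langle x\rangle n^\e\,dx\le\|p_0\|_{L^1}$, obtained exactly as in Claim 3 of the existence proof, is uniform in $\e$ and, together with the decay of $G$, forces $S^\e(x,t)\to 0$ as $|x|\to\infty$ uniformly in $(\e,t)$. Equicontinuity plus uniform decay at spatial infinity give precompactness in $C_0(\mathbb{R}^d_x\times[0,T])$ by Arzel\`a--Ascoli, yielding \eqref{f3-1}; combining the uniform sup-norm convergence with the uniform $x$-tightness then upgrades this to strong $L^q$ convergence for every $1\le q<\infty$, which is \eqref{j20-3}.
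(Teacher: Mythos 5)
Your proposal is correct, and for \eqref{j20-1}, \eqref{j20-2} and \eqref{j20-5} it is essentially the paper's argument: the paper also uses $L^1$/$L^\infty$ boundedness plus weak-$*$ compactness, and proves \eqref{j20-5} by exactly your tightness-in-$m$ mechanism, except that it tests with a smooth monotone $\varphi(m)$ vanishing for $m<m_+$ and equal to $m$ for $m>2m_+$ rather than with $(m-M_0)^+$ --- a cosmetic difference. Where you genuinely diverge is in the treatment of $S^\e$: the paper first proves the strong $L^q$ convergence \eqref{j20-3} via $L^q$ elliptic regularity ($S^\e\in W^{2,q}$, $\p_t S^\e\in W^{1,q}$), weighted moments $\int\langle x\rangle (S^\e)^q\,dx$, and the Brezis compactness criterion, and only afterwards upgrades to \eqref{f3-1} by Morrey embedding combined with a decay estimate $|x|^{1/d}S^\e\leq C$; you instead go straight for $C_0$ compactness by extracting uniform Lipschitz bounds in $(x,t)$ from the convolution structure ($\nabla S^\e=\nabla G* n^\e$, $\p_t S^\e=-\nabla G*\int_V v\,\bar p^\e\,dv$, both controlled by (P4) and the uniform $L^\infty$ bounds), applying Arzel\`a--Ascoli, and then downgrading to $L^q$. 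Your route is more elementary --- it avoids $W^{2,q}$ elliptic theory, the Brezis criterion and Sobolev embedding entirely, using only Young's inequality and equicontinuity --- and it makes the mechanism behind \eqref{f3-1} more transparent. The price is that two steps you state as immediate do require the quantitative work the paper performs: the claim that the weighted bound on $n^\e$ ``forces'' $S^\e\to 0$ as $|x|\to\infty$ uniformly in $(\e,t)$ is precisely the paper's Step 5 (the $I^\e+II^\e$ H\"older splitting exploiting (P2)--(P3)), not a one-line consequence; and for $q=1$ in \eqref{j20-3} the uniform decay alone is not integrable, so you need genuine $L^1$ $x$-tightness of $S^\e$ (e.g.\ $\int\langle x\rangle S^\e\,dx\leq C$, obtained from $\int\langle x\rangle n^\e\,dx\leq C$ either through the elliptic equation as in the paper's \eqref{j22-1} or through $\langle x\rangle\leq\langle x-y\rangle\langle y\rangle$ and the exponential decay of $G$). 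Both ingredients are available from your estimates, so these are presentational gaps rather than errors.
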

We postpone the proof of Lemma \ref{l3.1} to the end of this section. The main theorem in this section is
\begin{theorem}\label{f13-1}
Under the assumptions in Lemma \ref{l3.1}. Let $(p, \ \bar p,\  S)$ be defined in Lemma \ref{l3.1}. Then
\beq\label{frl14}
 p(x,v,m,t)=\bar{p}(x,v,t) \delta(m-m_0({S} )),
\eeq
where $m_0(S)$ is defined in \eqref{as:F1} such that $F(m_0(S),{S})=0$.  
And $(\bar p,  S)$ satisfies, in the sense of distribution, 
\beq\label{frl13}
\p_t \bar{p} +   v\cdot\nabla_x\bar{p} 
=  \int_V T(v,v',m_0(S))  \bar{p}(x,v',t) -  T(v',v,m_0(S))   \bar{p}(x,v,t) dv' , 
\eeq
\beq\label{j20-4}
 -\Delta S  +S = n (x,t):=  {\dis \int_{V}} \bar p (x,v,t) dv  .
 \eeq
 The initial data of the resulting limit equation \eqref{frl13} is given by $\bar p_0 = \int_0^\infty p_0(x,v,m) dm$.

\end{theorem}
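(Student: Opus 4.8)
The plan is to establish the concentration \eqref{frl14} first, and then to integrate the kinetic equation in $m$ and pass to the limit to obtain \eqref{frl13}--\eqref{j20-4}, using throughout the compactness recorded in Lemma \ref{l3.1}; the \emph{strong} convergence \eqref{f3-1} of $S^\e$ is what will allow the nonlinear quantities $F(m,S^\e)$ and $m_0(S^\e)$ to be identified in the limit. As a preliminary step I would prove a tightness estimate in the internal variable, ensuring no mass escapes to $m=+\infty$. Testing the first equation of \eqref{newepsilon} against the weight $W(m)=(m-m_+)_+$ and integrating by parts in $m$ (the boundary terms vanish by \eqref{frl3}), the transport and turning terms drop out after integration in $x,v$, leaving $\f{d}{dt}\int W\,p^\e = \f{1}{\e}\int W'(m)\,F(m,S^\e)\,p^\e$. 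Since $W'=\ind{m>m_+}$ and $m_+>m_0(S^\e)$, assumption \eqref{as:F1} gives $F(m,S^\e)<0$ on $\{m>m_+\}$, so the right side is nonpositive; hence $\int (m-m_+)_+\,p^\e(t)\le \int (m-m_+)_+\,p_0$, finite by \eqref{mar11-4}. This bound is uniform in $\e$ and yields $\int_{m>R}p^\e \to 0$ as $R\to\infty$ uniformly, so together with the mass conservation \eqref{mar12} the limit $p$ retains the full mass $\|p_0\|_{L^1}$.

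For the concentration \eqref{frl14} I would insert the rescaled test function $\varphi=\e\,\theta(x,v,t)\,\Psi(m)$ into the weak formulation of the kinetic equation. The prefactor $\e$ kills the time-transport and turning contributions and the initial datum, while it exactly cancels the $1/\e$ in the drift term, leaving in the limit $\int F(m,S)\,\theta\,\Psi'(m)\,dp = 0$ for all $\theta,\Psi$; here the passage $F(m,S^\e)\to F(m,S)$ is legitimate precisely because of the uniform convergence \eqref{f3-1}. This identity means $\p_m\big(F(m,S)\,p\big)=0$ in $m$, and with the boundary behaviour this forces $F(m,S)\,p\equiv0$. Since by \eqref{as:F1} the only zero of $F(\cdot,S)$ is $m_0(S)$, the measure $p$ must be supported on $\{m=m_0(S(x,t))\}$; disintegrating $p$ in $m$ and using \eqref{j20-5} then gives $p=\bar p(x,v,t)\,\delta(m-m_0(S))$.

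To obtain the macroscopic system I would integrate the kinetic equation over $m\in[0,\infty)$; the flux $\p_m[F p^\e/\e]$ integrates to zero by \eqref{frl3}, yielding the closed equation $\p_t\bar p^\e+v\cdot\nabla_x\bar p^\e=\int_V\int_0^\infty\{T(v,v',m)p^\e(v')-T(v',v,m)p^\e(v)\}\,dm\,dv'$. The transport part passes to the limit by \eqref{j20-2}. For the turning part I would test $p^\e\rightharpoonup p$ against $T(v,v',\cdot)$, which is bounded and continuous in $m$; combined with the tightness above and the Dirac structure just proved, this gives $\int_0^\infty T(v,v',m)\,dp=T(v,v',m_0(S))\,\bar p(x,v',t)$, producing \eqref{frl13}. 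The elliptic relation \eqref{j20-4} follows by passing to the limit in $-\Delta S^\e+S^\e=n^\e$ using the strong convergence of $S^\e$ and the weak-$*$ convergence of $n^\e=\int_V\bar p^\e\,dv$, while the initial datum $\bar p_0=\int_0^\infty p_0\,dm$ is recovered in the distributional sense.

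The hard part, as flagged in the abstract, is twofold and both difficulties are nonlinear. The first is the concentration itself: passing $F(m,S^\e)$ to $F(m,S)$ in the drift is impossible under mere weak convergence of $S^\e$, so the strong and uniform compactness \eqref{j20-3}--\eqref{f3-1} of the chemical potential is essential, and this same strong convergence is what makes $m_0(S^\e)\to m_0(S)$ and hence identifies the limiting turning rate $T(\cdot,\cdot,m_0(S))$. The second is controlling the internal variable at infinity so that no mass is lost in the weak-$*$ limit, which is exactly the role of the weight $(m-m_+)_+$ together with the moment hypothesis \eqref{mar11-4}.
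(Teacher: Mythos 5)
Your proposal is correct and follows essentially the same route as the paper: multiplying the kinetic equation by $\e$ (your rescaled test functions $\e\,\theta\,\Psi$ are the weak-formulation version of this) to obtain $\p_m[F(m,S)p]=0$, hence $F(m,S)p\equiv 0$ and the Dirac concentration at $m_0(S)$ via the strong convergence \eqref{f3-1}, then integrating in $m$ and passing to the limit for \eqref{frl13}--\eqref{j20-4}. Your preliminary tightness estimate with the weight $(m-m_+)_+$ is the same argument (with a smoothed weight) that the paper uses to prove \eqref{j20-5} inside Lemma \ref{l3.1}, so it is already contained in the hypotheses the theorem invokes.
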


\begin{proof}
Multiply Equation \eqref{newepsilon}$_1$ by $\varepsilon$ to get
$$
 \p_m [  F(m, S^{\varepsilon})   p^{\varepsilon}] 
= \e \int_V [ T(v,v',m) p^{\varepsilon}(x,v',m,t) - T(v',v,m)  p^{\varepsilon}(x,v,m,t)] dv' 
-\varepsilon \p_t p^{\varepsilon} - \varepsilon v \cdot \nabla_x p^{\varepsilon} .
$$
Recall we had \eqref{j20-1} that $p^{\e}    \rightharpoonup p$ weak* in   $ \mathcal{M}( \mathbb{R}^d_x \times V \times \mathbb{R}^{+}_m \times  [0,T]) $, 
 and \eqref{f3-1} that $S^\e (x, t)   \rightarrow   S(x, t) $    in  $C_0( \mathbb{R}^d_x \times  [0,T] ) $, 
 therefore we can pass to the limit in weak sense as $\e \rightarrow 0$, up to a subsequence, to get             
\beq\label{frl10}
 \p_{m}   [ F(m, S)   p] = 0  \quad\text{in the sense of measure, }
\eeq
thus
$$
F(m, S)   p  \equiv constant \equiv 0  \quad\text{in the sense of measure. }
$$
Using  the assumption \eqref{as:F1} on $F$, also using the fact \eqref{j20-5},  we  readily  conclude \eqref{frl14}.

On the other hand, take the integration of \eqref{newepsilon}$_1$ with respect to $m$ we have
$$
\p_{t} \bar{p}^{\e} + v\cdot \nabla _{x}\bar{p}^{\e} =  \int_{0}^{\infty} \int_V T(v,v',m) p^{\varepsilon}(x,v',m,t) -  T(v',v,m) p^{\varepsilon}(x,v,m,t)] dv'dm.
 $$
By  \eqref{j20-2}, we pass to the limit as $\e\to 0$ up to a subsequence,  and find, in the sense of distribution,
 $$
\p_{t} \bar{p} + v\cdot \nabla _{x}\bar{p}  =  \int_{0}^{\infty} \int_V T(v,v',m) p (x,v',m,t) - T(v',v,m) p (x,v,m,t)] dv' dm.
 $$
 Recall that $p$ has the form in \eqref{frl14}, then the equation of $\bar{p}$ is thus \eqref{frl13}. 
 
 Next, \eqref{j20-4} is derived by passing to the limit as $\e\to 0$ up to a subsequence on \eqref{newepsilon}$_2$, using the property \eqref{j20-3}.
      $\hfill\square$

\end{proof}

\vspace{3mm}

\begin{proof}   {\it Proof of Lemma \ref{l3.1}.}

{\bf 1. }Recall the a priori estimate,  $\{p^{\e} \}$ is bounded in $  L^{1}( \mathbb{R}^d_x \times V \times \mathbb{R}^{+}_m \times[0,T] ) $, thus $w*$ precompact in  $\mathcal{M}( \mathbb{R}^d_x \times V\times \mathbb{R}^{+}_m \times[0,T])$, so we have \eqref{j20-1}.

{\bf 2. }Similarly, recall \eqref{newprop:p2},  $\{\bar p^{\e} \}$ is uniformly (with respect to $\e$) bounded in $  L^\infty( \mathbb{R}^d_x \times V  \times[0,T] ) $ thus $w*$ precompact  so we have \eqref{j20-2}.


{\bf 3. }To prove \eqref{j20-5}, we have to control the tail for large $m$. We notice from the assumption \eqref{as:F1} that
$$
F(m, S^\e) < 0 \quad \text{when}\quad m>m_+,
$$ 
where $m_+$ is a uniform upper bound of $m_0(S^\e)$.  Now we take a smooth non-decreasing  function $\varphi(m) $ such that 
$$\varphi(m) = 0\quad\text{when}\quad  m<m_+  , \quad     \varphi(m) \leq m\quad\text{when}\quad  m_+ \leq m\leq 2m_+,  \quad     \varphi(m) = m\quad\text{when}\quad  m>2m_+, $$
 multiply with the equation \eqref{newepsilon}$_1$, and integrate with respect to $x,v,m$, then we have
$$
{d\over dt} \int_{m_+}^{\infty}\int_{\mathbb{R}^d_x} \int_V  \varphi(m) p^\e dv dx dm = 
 \int_{m_+}^{\infty}\int_{\mathbb{R}^d_x} \int_V \varphi'(m)  {F(m,S^\e)\over \e}      p^\e dv dx dm < 0, 
$$
thus
$$
\int_{2m_+}^{\infty}\int_{\mathbb{R}^d_x} \int_V m  p^\e dv dx dm \leq
\int_{m_+}^{\infty}\int_{\mathbb{R}^d_x} \int_V  \varphi(m) p^\e dv dx dm 
\leq \int_{m_+}^{\infty}\int_{\mathbb{R}^d_x} \int_V  \varphi(m) p_0 dv dx dm 
$$
\beq\label{f5-1}
\leq \int_{0}^{\infty}\int_{\mathbb{R}^d_x} \int_V  m p_0 dv dx dm  < \infty.
\eeq
The last inequality is based on the assumption \eqref{mar11-4} and gives the control at $\infty$. Now choose a test function $\phi(x,v,t) \in C_0 $ and a smooth cutoff function 
$$
0\leq \chi_R(m)\leq 1, \quad \chi_R(m) =1  \quad \text{for}\quad m<R, \quad \chi_R(m) =0  \quad \text{for}\quad m>2R. 
$$
Then we compute 
\beq\label{f6-1}
\begin{array}{rcl}
{\dis  \int_{0}^T \int_{\mathbb{R}^d_x} \int_V   \phi(x,v,t)  \bar p^\e dv dx dt }
&=& {\dis   \int_{0}^T \int_{\mathbb{R}^d_x} \int_V     \int_{0}^{\infty} \phi(x,v,t)    p^\e dm dv dx dt   } \vspace{2mm} \\
&=&{\dis   \int_{0}^T \int_{\mathbb{R}^d_x} \int_V     \int_{0}^{\infty} \phi(x,v,t)  \chi_R(m)  p^\e dm dv dx dt }\vspace{2mm} \\
& & + {\dis   \int_{0}^T \int_{\mathbb{R}^d_x} \int_V     \int_{0}^{\infty} \phi(x,v,t) (1- \chi_R(m))  p^\e dm dv dx dt .}
\end{array}
\eeq
For the first term on the right hand side, note from \eqref{j20-1} that $p^\e \rightharpoonup p   $ weak*, for any given $R$, 
$$
 \int_{0}^T \int_{\mathbb{R}^d_x} \int_V     \int_{0}^{\infty} \phi(x,v,t)  \chi_R(m)  p^\e dm dv dx dt
 \rightarrow   \int_{0}^T \int_{\mathbb{R}^d_x} \int_V     \int_{0}^{\infty} \phi(x,v,t)  \chi_R(m)  p  dm dv dx dt 
 \quad \text{as} \quad \e \rightarrow 0.
$$
For the second term, the control at $\infty$ estimate \eqref{f5-1} ensures 
$$
\int_{0}^T \int_{\mathbb{R}^d_x} \int_V     \int_{0}^{\infty} \phi(x,v,t) (1- \chi_R(m))  p^\e dm dv dx dt
\leq \| \phi \|_{C_0} \int_{0}^T \int_{\mathbb{R}^d_x} \int_V     \int_{R}^{\infty}   p^\e dm dv dx dt
$$
$$
\leq   {1\over R}  \| \phi \|_{C_0} \int_{0}^T \int_{\mathbb{R}^d_x} \int_V  \int_{R}^{\infty}  m p^\e dm dv dx dt  
\rightarrow 0 \text{ as } R\rightarrow \infty,
$$
 thus let $R\rightarrow \infty$ in \eqref{f6-1} we have
 $$
\int_{0}^T \int_{\mathbb{R}^d_x} \int_V   \phi(x,v,t)  \bar p^\e dv dx dt 
 \rightarrow   \int_{0}^T \int_{\mathbb{R}^d_x} \int_V     \int_{0}^{\infty} \phi(x,v,t)    p  dm dv dx dt \quad  \text{as}\quad \e \rightarrow 0.
 $$
Since we already have \eqref{j20-2}, by uniqueness of the limit we proved  \eqref{j20-5} in the sense of measure.


{\bf 4. }For $\{S^\e (x, t) \}$, recall \eqref{newprop:ps}, it is bounded in 
$L^{1}\cap L^\infty( \mathbb{R}^d_x\times  [0,T] )$ uniformly with respect to $\e$, to show strong compactness, we use the claims
\begin{itemize}
\item 
Local compactness in space:   
\beq\label{f3-2}
S^\e(\cdot, t) \in W^{2,q} (\mathbb{R}^{d}_{x}), \quad \forall t\in [0,T],\quad \forall 1<q<\infty ,
\eeq
\item
 Local compactness in time:   
 \beq\label{f3-3}
\p_{t}S^\e(\cdot, t) \in W^{1,q} (\mathbb{R}^{d}_{x}), \quad \forall t\in [0,T],\quad \forall 1<q<\infty ,
\eeq
 \item
Control at $\infty$:  
\beq\label{j22-1}
 \int_{\mathbb{R}^{d}_{x}} \langle x \rangle (S^\e)^qdx  < \infty, \  \forall t\in[0,T],\quad \forall 1\leq q<\infty .
\eeq

\end{itemize}
The first two claims are elliptic estimates which can be similarly derived as in last section. We only prove \eqref{j22-1}.
Recall the equation for $p^\e$:
$$
 \p_t p^{\varepsilon} +  v \cdot \nabla_x p^{\varepsilon} + \p_m [  \f{F(m, S^{\varepsilon})}{\e}  p^{\varepsilon}] = {\dis \int}_V T(v,v',m) p^{\varepsilon}(x,v',m,t) - T(v',v,m) p^{\varepsilon}(x,v,m,t) dv' . 
$$
Multiply both sides of the equation by $\langle x\rangle$ and integrate with respect to $x,v,m$, we have
$$
{d\over dt} \int_{\mathbb{R}^d_x} \int_{0}^{\infty}\int_V  \langle x\rangle p^{\varepsilon} dv dm dx 
 = \int_{\mathbb{R}^d_x} \int_{0}^{\infty}\int_V  {v\cdot x \over \langle x\rangle }p^{\varepsilon} dv dm dx 
\leq  \|p_{0}\|_{L^1},
$$
that is,
$$
{d\over dt} \int_{\mathbb{R}^d_x} \langle x\rangle n^\e dx \leq  \|p_{0}\|_{L^1},
$$
thus we obtain
\beq\label{j22-2}
 \int_{\mathbb{R}^d_x} \langle x\rangle n^\e dx \quad\text{is uniformly bounded for all}\quad t\in [0,T].
\eeq

Next, recall the equation for $S^\e$:
$$
-\Delta S^\e +S^\e = n^\e, 
$$
multiply both sides of the equation by $\langle x\rangle (S^\e)^{q-1}$, for any $q$ large and fixed,  then integrate with respect to $x$, we have
$$
\int_{\mathbb{R}^d_x} \langle x\rangle (S^\e)^{q-1} (-\Delta S^\e +S^\e ) dx
=\int_{\mathbb{R}^d_x} \langle x\rangle (S^\e)^{q-1} n^\e dx,
$$
then
$$
\int_{\mathbb{R}^d_x} \langle x\rangle (S^\e)^q  dx
+\int_{\mathbb{R}^d_x} \nabla ( \langle x\rangle (S^\e)^{q-1}) \cdot \nabla S^\e dx
 = \int_{\mathbb{R}^d_x} \langle x\rangle (S^\e)^{q-1} n^\e dx.
$$
Note here that 
$$
\begin{array}{rcl}
\dis{ \int_{\mathbb{R}^d_x} \nabla ( \langle x\rangle (S^\e)^{q-1}) \cdot \nabla S^\e  dx }
&=& \dis{  \int_{\mathbb{R}^d_x} \langle x\rangle (p-1) (S^\e )^{q-2} \nabla S^\e  \cdot \nabla S^\e 
+ \int_{\mathbb{R}^d_x} (S^\e)^{q-1}  \nabla S^\e  \cdot \nabla\langle x\rangle  }
\vspace{2.5mm}  \\ 
&=&  \dis{ \int_{\mathbb{R}^d_x} \langle x\rangle (p-1) (S^\e )^{q-2} | \nabla S^\e  |^2
-  \int_{\mathbb{R}^d_x} {1\over q} (S^\e)^q \Delta\langle x\rangle }
\vspace{2.5mm}  \\
&=& \dis{  \int_{\mathbb{R}^d_x} \langle x\rangle (p-1) (S^\e )^{q-2} | \nabla S^\e  |^2
-  \int_{\mathbb{R}^d_x} {1\over q} (S^\e)^q 
({d-1\over \langle x\rangle} + {1\over \langle x\rangle^3}). }
\end{array}
$$
Then, we find 
$$
\int_{\mathbb{R}^d_x} \langle x\rangle (S^\e)^q  dx
+\int_{\mathbb{R}^d_x} \langle x\rangle (q-1) (S^\e )^{q-2} | \nabla S^\e  |^2dx
=  \int_{\mathbb{R}^d_x} {1\over q} (S^\e)^q 
({d-1\over \langle x\rangle} + {1\over \langle x\rangle^3})dx
+ \int_{\mathbb{R}^d_x} \langle x\rangle (S^\e)^{q-1} n^\e dx,
$$
that is
$$
\int_{\mathbb{R}^d_x} \langle x\rangle (S^\e)^q  dx
\leq  {d\over q} \int_{\mathbb{R}^d_x}  (S^\e)^q dx
+  \|S^\e \|_{L^\infty_x}^{q-1}  \int_{\mathbb{R}^d_x} \langle x\rangle n^\e dx,
$$
using again \eqref{newprop:ps} that  $\{S^\e (\cdot, t) \}$  is uniformly bounded in $L^{1}_x\cap L^\infty_x $  for all $t\in [0,T]$, and the fact \eqref{j22-2},
we conclude, for any fixed $q$ large,
$$
 \int_{\mathbb{R}^{d}_{x}} \langle x \rangle (S^\e)^q dx  < \infty, \  \forall t\in[0,T].
$$
 Note also \eqref{j22-1} already holds for $q=1$, which is because we can use the similar argument as for \eqref{sch2}, therefore we have proved \eqref{j22-1} for all $1\leq q < \infty$.

In conclusion, by using again the strong compactness criterion in Brezis \cite{Brezis},
 we have \eqref{j20-3} from the above claims.

 {\bf 5. }To prove \eqref{f3-1},  we first estimate the control of $S^\e$ at $x=\infty$. For simplicity of notation, we abbreviate the time variable in the following.

  Fix $R>1$.  For $|x|\geq R$, we compute
$$
\bea
|x|^{1/d} S^\e(x) &= \dis{\int   |x|^{1/d} G(x-y) n^\e  (y) dy         }    \vspace{2.5mm}  \\ 
& \leq \dis{ \int   |x-y|^{1/d}    G(x-y) n^\e  (y) dy + \int   |y|^{1/d}  G(x-y) n^\e  (y) dy         }    \vspace{2.5mm}  \\ 
& =: I ^\e+ II^\e.
\eea  
$$

For $I^\e$, note that $G$ is only singular   at  the origin, we  cutoff the singularity  and   estimate 
$$
\bea
I^\e = & \dis{ \int_{|x-y|<1}   |x-y|^{1/d}    G(x-y) n^\e  (y) dy } + \dis{ \int_{|x-y|>1}   |x-y|^{1/d}    G(x-y) n^\e  (y) dy }    \vspace{2.5mm}  \\ 
\leq & \dis{ \int_{|x-y|<1}      G(x-y) n^\e  (y) dy } + \dis{ \Big( \int_{|x-y|>1}   |x-y|^{2/d}  G^2(x-y)  dy \Big)^{1\over 2}  \   \| n^\e \|_{L^2_x}  }     \vspace{2.5mm}  \\ 
\leq &   \| n^\e \|_{L^\infty_x} + \dis{ \Big( \int_{|x|>1}   |x|^{2/d}  G^2(x)  dx \Big)^{1\over 2}  \   \| n^\e \|_{L^2_x}  }   ,
\eea  
$$
recall  \eqref{newnlinf}, note also the $L^1$ bound of $\{n^\e\}$ is trivial from conservation of mass,  thus $\{n^\e\}$ is uniformly bounded in $L^1_x \cap L^\infty_x$,  and note the property (P3) of Bessel potential $G(x)$ in Proposition \ref{BP}, the integration above is finite. Then we conclude that $I^\e$ is uniformly bounded (the bound is independent of $\e$).

For $II^\e$,    we use H\"older's inequality, with $q={d \over d-1}$ and $q'=d$,  to get
\beq\label{mar11-1}
\bea
II^\e =  &  \dis{ \int   |y|^{1/d}  G(x-y) n^\e  (y) dy  \leq  \|G\|_{L^{d \over d-1}}   \Big(  \int   |y| n^\e(y)^d dy \Big)^{1\over d}}  \vspace{2.5mm}  \\ 
 \leq &  \dis{   \|G\|_{L^{d \over d-1}}      \Big(  \| n^\e \|_{L^\infty_x}^{d-1}   \int   |y| n^\e(y) dy \Big)^{1\over d} }.
\eea
\eeq
Here we use the property of Bessel potential  (P2) in  Proposition \ref{BP}, together with the estimates \eqref{newnlinf} and \eqref{j22-2}, we conclude from \eqref{mar11-1} that  $II^\e$ is also uniformly bounded (the bound is independent of $\e$).  Combine the above, we conclude
\beq\label{f13-2}
|x|^{1/d} S^\e(x) < \infty \quad \text{for all}\quad |x|>R  \   (>1).
 \eeq

       Next, by Morrey type Sobolev embedding Theorem (see Evans \cite{Evans}), note the elliptic estimates \eqref{f3-2}-\eqref{f3-3} imply that  the convergence of $ S^\e (x, t)   \rightarrow   S(x, t) $  is actually in H\"older space $ C^{0,1-\delta}( \mathbb{R}^d_x \times  [0,T] )  $, for any $\delta\in(0,1)$. Together with the estimates  \eqref{f13-2}, which yields that $S(x)$ vanishes at infinity, then  \eqref{f3-1} is proved. 
 $\hfill\square$

\end{proof}

\section{Conclusion}
 
We have considered a nonlinear kinetic chemotaxis model  with internal dynamics  incorporating signal transduction and adaptation.  Under some quite general assumptions on the model and the initial data, we have proved the  global existence of weak solution by using the Schauder fixed point theorem.  More precisely, for our mathematical treatment, we generalise the assumptions on the adaptation rate $F$ and turning kernel $T$ in the model than that in \cite{EH,EO,EO1}. Compare with the global existence  result in \cite{EH}, our result holds for any physical space dimensions. Moreover, the uniqueness of weak solution is also derived, based on some further regularity estimates on the solutions, following the method devised in \cite{LP}.

 Next, we considered the fast adaptation limit of this model  to Othmer-Dunbar-Alt type kinetic chemotaxis model. This limit  gives some  insight to the molecular origin of  the chemotaxis behaviour, by incorporating information about microscopic intracellular processes such as signal transduction and response  into the chemotaxis description.  This was done in \cite{EO,EO1} for a highly simplified description of intracellular dynamics, where  linear dynamics for the response to an extracellular signal was assumed. We remark also that  in order to derive the molecular origin of the chemotaxis behaviour, we did not use any moment closure to derive the closed evolution equation for the macroscopic density of cells as in \cite{EO,EO1}, instead, a kinetic type limit equation  \eqref{frl13} is arrived, with turning kernel incorporating information from microscopic intracellular processes.

In our analysis, the fast adaptation limit is derived by extracting a weak convergence subsequence in measure space. For this limit, the first difficulty is to show the concentration effect on  the internal state. 
When the small parameter $\e$, the adaptation time scale, goes to zero, we prove that the solution converges to a Dirac mass in the internal state variable. Another difficulty is the strong compactness argument on the chemical potential, which is essential for passing the nonlinear kinetic equation to the weak limit.

For future works, it is interesting to consider the case with several chemical reactions as related to the model proposed in Erban-Othmer \cite{EO,EO1}.  We also mention that other types of scaling and related limits can be considered.  For example, based on both fast adaptation and stiff response, the paper \cite{PTV} studied how the path-wise gradient of chemotactic signal arises from intra-cellular molecular content. 
Several other rescallings are possible. For instance, an open problem is the asymptotic behaviour of  the hyperbolic scaling, with \eqref{intr1}$_1$ replaced by
 $$
   \p_t p  +  v \cdot \nabla_x p  + \p_m [ { F(m, S ) \over \e}  p ] = {\dis {1\over \e} \int_V }T(v,v',m) p (x,v',m,t) - T(v',v,m) p (x,v,m,t) dv'.
   $$  
The hyperbolic limit is quite different from the fast adaptation limit, because, in that case, even the a priori estimate for $\bar p$ is not uniform in $\e$, so the limit equation is unclear, a difficult part of that will be to determine how the extracellular signal feeds into the chemotaxis response of the cells.



\section*{Acknowledgements.}  This work is mentored by  Professor Beno\^ \i t Perthame during the author's visit to Laboratoire Jacques-Louis Lions, UPMC, France.  Professor Beno\^ \i t Perthame provided  lots of fruitful suggestions and essential help.   The author is  deeply indebted to  him for his hospitality and generous help.  The research is partially support  by  National Natural Science Foundation of China (No. 11301182), and Science and Technology commission of Shanghai Municipality (No. 13ZR1453400).

%
%
%

\end{document}